\documentclass[12pt]{amsart}

\newcommand{\set}[1]{\left\{#1\right\}}
\newcommand{\abs}[1]{\left|#1\right|}
\newcommand{\brac}[1]{\left(#1\right)}



\usepackage{amssymb}

\usepackage{enumerate}

\usepackage{pgfplots}
\usepgfplotslibrary{dateplot}
\usepackage{graphicx}
\usepackage{cases}
\makeatletter
\@namedef{subjclassname@2010}{%
  \textup{2010} Mathematics Subject Classification}
\makeatother



\newtheorem{thm}{Theorem}[section]

\newtheorem{lem}[thm]{Lemma}

\newtheorem*{conj*}{Denjoy's Conjecture}



\theoremstyle{definition}



\numberwithin{equation}{section}


\frenchspacing

\textwidth=14.5cm
\textheight=23cm
\parindent=16pt
\oddsidemargin=1.0cm 
\evensidemargin=1.0cm 
\topmargin=-0.5cm




\begin{document}


\baselineskip=17pt



\title[LDEs with solutions in weighted Fock spaces]{Linear differential equations with solutions in weighted Fock spaces}

\author[G. M. Hu\ and J.-M. Huusko]{ Guangming Hu \ and  Juha-Matti Huusko$^{*}$}
\address{Guangming Hu \newline\ School of Mathematics and Systems Science, Beihang University,
 Beijing, 100191, P. R. China.}
 \email{18810692738@163.com}
\address{  Corresponding author$^{*}$
\newline
Department of Physics and Mathematics, University of Eastern Finland,
P.O.Box 111, FI-80101 Joensuu,  Finland }
\email{juha-matti.huusko@uef.fi}


\date{}

\begin{abstract}
This research is concerned with the nonhomogeneous  linear complex differential equation
$$
f^{(k)}+A_{k-1}f^{(k-1)}+\cdots+A_{1}f'+A_{0}f=A_{k}
$$
in the complex plane. In the higher order case, the mutual  relations between  coefficients and  solutions in weighted Fock  spaces are discussed, respectively. In particular, sufficient conditions for the solutions of  the second order case
$$
f''+Af=0
$$
to be in some weighted Fock  space are given by Bergman reproducing kernel and  coefficient $A$.
\end{abstract}

\subjclass[2010]{34M10; 30H20.}

\keywords{Linear differential equation, Fock space,  weighted Fock space}

\maketitle

\section{Introduction and main results}
\indent The growth of solutions of the complex linear differential equation
$$
f^{(k)}+A_{k-1}f^{(k-1)}+\cdots+A_{1}f'+A_{0}f=A_{k}\eqno{(1)}
$$ has drawn wide attention.  Abundant results about the growth of fast growing solutions of $(1)$ have been obtained  by Nevanlinna theory \cite{a,c,b,d,e,g,j}. However, some other methods are needed in dealing with slow growth analytic solutions \cite{h,i,p,v,k,y}. There are some useful and powerful techniques, for instance, Herold's comparison theorem \cite{r}, Gronwall's lemma \cite{p}, Picard's successive approximations \cite{e,o} and some methods based on Carleson measures \cite{q,n,s,k}.\\
 \indent In \cite{i}, J. Heittokangas, R. Korhonen, and J.~R\"atty\"a found that if the coefficients $A_{j}(z)$ ($j=0,\cdots,k-1$) of the homogeneous equation $(1)$ belong to certain weighted Bergman or Hardy spaces, then all solutions are finite order of growth.  Conversely, if all solutions of  the homogeneous equation $(1)$ are  finite order of growth, the coefficients $A_{j}$ $(j=0,\cdots,k-1)$ belong to weighted Bergman space. In \cite{r},  authors  gave the growth estimates of solutions of  $(1)$.\\
\indent Motivated by the research in \cite{r} and \cite{i}, we study similar problems in weighted Fock spaces. These spaces have a long history in mathematics and mathematical physics. They have been given a wide variety of appellations, including many combinations and permutations of the names Fock, Bargmann, Segal and Fischer (see \cite{C.,C..,E.,V..}).  The classical Fock space (see \cite{V1,V2,k.zhu}) is a subspace of a special Lebesgue measurable space. Let $L_{g}^{p}$  be the space of Lebesgue measurable functions $f$ on the complex plane $\mathbb{C}$ such that the function $f(z)e^{-\frac{1}{2}|z|^{2}}$ belongs to $L^{p}(\mathbb{C},dm(z))$, for some $p\in [1,\infty]$.  The norm of $f$ in the space $L_{g}^{p}$ is defined by
$$
\|f\|_{p}^p:=\int_{\mathbb{C}}|f(z)e^{-\frac{1}{2}|z|^{2}}|^{p} dm(z),
$$
for $p\in [1,\infty)$, and
$$
\|f\|_{\infty}:=\sup_{z\in\mathbb{C}}|f(z)|e^{-\frac{1}{2}|z|^{2}},
$$for $p=\infty$, where $dm(z)=dxdy$ denotes the Lebesgue measure in $\mathbb{C}$.
The Fock space $F^{p}$ consists of entire functions in  $L_{g}^{p}$.
In particular, the space $F^{2}$ is a closed subspace of the Hilbert space $L_{g}^{2}$ with the inner product
 $$ \langle f,g\rangle: = \frac{1}{\pi}\int_{\mathbb{C}}f(z)\overline{g(z)}e^{-|z|^{2}}dm(z).$$ Recently, a new function space called Fock-Sobolev space attracted the attention of many scholars and  was first put forward by Hong Rae Cho and Kehe Zhu in \cite{zhu}.  Fock-Sobolev space $F^{p,m}$  consists of entire functions $f$ on $\mathbb{C}$ such that
 $$ \|f\|_{F^{p,m}}:=\sum_{\alpha\leq m}\|f^{(\alpha)}(z)\|_{p}<\infty,
 $$where $\|\cdot\|_{p}$ is the norm in $F^{p}$, $m$ is a positive integer, and $p\in [1,\infty]$. By Theorem $A$ in \cite{zhu}, we obtain that $f\in F^{p,m}$ if and only if $z^{m}f(z)$ is in $F^{p}$. Namely, there is a positive constant $C$ such that $$
 C^{-1}\|z^{m}f(z)\|_{p}\leq \sum_{\alpha\leq m}\|f^{(\alpha)}(z)\|_{p}\leq C\|z^{m}f(z)\|_{p}.$$
 Moreover, the weighted Fock space \cite{O.C} is defined as follows. Let $\phi: [0,\infty)\rightarrow\mathbb{R}^{+}$ be a twice continuously differentiable function and extend $\phi$ to the complex plane $\mathbb{C}$ setting $\phi(z)=\phi(|z|)$, for $z\in\mathbb{C}$. We set the norms of the weighted Fock spaces
$$
\|f\|_{F_{\phi}^{p}}^p:=\int_{\mathbb{C}}|f(z)|^{p}e^{-p\phi(z)} dm(z),
$$
for $p\in [1,\infty)$, and
$$
\|f\|_{F_{\phi}^{\infty}}:=\sup_{z\in\mathbb{C}}|f(z)|e^{-\phi(z)},
$$for $p=\infty$.
This definition gives the classical Fock space, when $\phi(z)=|z|^2/2$, and the Fock-Sobolev space, when $\phi(z)=|z|^2/2-m\log |z|$.\\
\indent For the remainder of this paper, we restrict the weight $\phi$ to be a so-called rapidly increasing function. Namely, let $\phi:[0,\infty)\rightarrow \mathbb{R}^{+}$ be twice continuously differentiable such that its Laplacian satisfies $\Delta \phi(|z|)>0$. Then there exists a positive differentiable function $\tau:\mathbb{C}\rightarrow\mathbb{R}^{+}$, with $\tau(|z|)=\tau(z)$, and a constant $C\in(0,\infty)$ such that $\tau(z)=C$, for $0\leq|z|<1$, and
$$
C^{-1} (\Delta \phi(|z|))^{-\frac{1}{2}}\leq\tau(z)\leq C(\Delta \phi(|z|))^{-\frac{1}{2}},\quad |z|\geq 1.
$$
We have $\tau(z)\to 0$ as $|z|\rightarrow\infty$ and $\lim_{r\rightarrow\infty}\tau'(r)=0$. Furthermore, suppose that either there exists a constant $C>0$ such that $\tau(r)r^{C}$ increases for large $r$ or
$$\lim_{r\rightarrow\infty}\tau'(r)\log\frac{1}{\tau(r)}=0.
$$ Denote by $\mathcal{I}$ the class of rapidly increasing functions $\phi$ satisfying the above mentioned conditions. The class $\mathcal{I}$ includes the power functions $\phi(r)=r^{\alpha}$ with $\alpha>2$ and exponential type functions such that $\phi(r)=e^{\beta r}$, $\beta>0$ or $\phi(r)=e^{e^{r}}$.

Denote the point evaluations by $L_{\zeta}(f)=f(\zeta)$, for $f\in  F^{2}_{\phi}$ and $\zeta\in\mathbb{C}$.
In \cite{O.C}, it is proved that the point evaluations $L_\zeta$ are bounded linear functionals in $F^{2}_{\phi}$. It follows that, there exists a reproducing kernel $K_{\zeta}\in F^{2}_{\phi}$, with $\|K_{\zeta}\|_{ F^{2}_{\phi}}=\|L_{\zeta}\|_{F^{2}_{\phi}\to\mathbb{C}}$, such that
$$f(\zeta)=L_{\zeta}(f(z))=\int_{\mathbb{C}}f(z)\overline{K_{\zeta}(z)}e^{-2\phi(z)}dm(z),$$for $\phi$ in the class $\mathcal{I}$. Moreover, if $\{e_{n}\}$ is an orthonormal basis of $F_{\phi}^{2}$, for example,
$$
e_{n}(z)=z^{n}\delta^{-1}_{n}, \quad n\in\mathbb{N},
$$
where $\delta_{n}^{2}=2\pi\int_{0}^{\infty}r^{2n+1}e^{-2\phi(r)}dr$, then
$$K_{\zeta}(z)=\Sigma_{n=0}^{\infty}\langle K_{\zeta}, e_{n}\rangle e_{n}(\zeta)=\Sigma_{n=0}^{\infty}e_{n}(\zeta)\overline{e_{n}(z)}.
$$

\indent By using similar idea with weighted Fock  space, Fock-Sobolev weighted space is defined  as follows. Let $\phi: [0,\infty)\rightarrow\mathbb{R}^{+}$ be a twice continuously differentiable function and extend $\phi$ to the complex plane $\mathbb{C}$ setting $\phi(z)=\phi(|z|)$, for $z\in\mathbb{C}$. We set the norms of the weighted Fock-Sobolev spaces
$$
\|f\|_{F^{p,q} _{\phi}}:=\bigg( \int_{\mathbb{C}}|f(z)|^{p}e^{-p\phi(z)}\phi^{q}(z) d(z)\bigg)^{\frac{1}{p}},
 $$
 for $p\in [1,\infty)$ and $q\in\mathbb{R}$.  The weighted Fock-Sobolev space is the weighted Fock space, when $q=0$.\\
\indent Motivated by the result of \cite{zj}, we can obtain the following two theorems.
\begin{thm}\label{1} {\normalfont} Suppose that  $A_{j}(z)$ are entire functions, $j=1,\cdots,k$. If  $$C\sum_{i=0}^{k-1}D_{i}\sup_{z\in\mathbb{C}}\bigg\{\frac{|A_{i}(z)|}
{(1+|z|)^{k-i}}\bigg\}<1$$ and the $k$th order primitive function $\varphi_{k}(z)$ of $A_{k}(z)$ belongs to $F^{p}$, where $C$ and $D_{i}$ only depend on $p,k$. Then all solutions of $(1)$ belong to $F^{p}$.
\end{thm}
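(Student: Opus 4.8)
The plan is to recast $(1)$ as an integral (fixed-point) equation on $F^{p}$ and then invert that equation by a Neumann series whose convergence is furnished by the smallness hypothesis. Writing $(1)$ as $f^{(k)}=A_{k}-\sum_{i=0}^{k-1}A_{i}f^{(i)}$ and integrating $k$ times from the origin, via the Cauchy formula for repeated integration, I would obtain
$$
f(z)=P(z)+\varphi_{k}(z)-\sum_{i=0}^{k-1}J_{i}(f)(z),\qquad
J_{i}(f)(z):=\frac{1}{(k-1)!}\int_{0}^{z}(z-\zeta)^{k-1}A_{i}(\zeta)f^{(i)}(\zeta)\,d\zeta,
$$
where $P$ is the Taylor polynomial of degree $<k$ encoding the initial data $f^{(j)}(0)$, and $\varphi_{k}$ is the $k$th primitive of $A_{k}$ normalized so that $\varphi_{k}^{(j)}(0)=0$ for $j<k$. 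Since $P$ is a polynomial and $\varphi_{k}\in F^{p}$ by hypothesis, the affine map $T(f):=P+\varphi_{k}-\sum_{i}J_{i}(f)$ has its constant term in $F^{p}$, and the whole problem reduces to understanding the linear operators $J_{i}$ on $F^{p}$.

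The technical heart is the bound
$$
\nm{J_{i}(f)}_{p}\le C D_{i}\Big(\sup_{z\in\mathbb{C}}\frac{\abs{A_{i}(z)}}{(1+\abs{z})^{k-i}}\Big)\nm{f}_{p},\qquad i=0,\dots,k-1,
$$
with $C,D_{i}$ depending only on $p$ and $k$. I would prove it by an order-counting argument adapted to the Fock metric: in $F^{p}$ each differentiation $f\mapsto f^{(i)}$ behaves, norm-wise, like multiplication by $(1+\abs{z})^{i}$ (as one checks on monomials, since $z^{n}$ concentrates near $\abs{z}=\sqrt{n}$), the coefficient bound contributes a factor $(1+\abs{z})^{k-i}$ times $M_{i}:=\sup_{z}\abs{A_{i}}/(1+\abs{z})^{k-i}$, and the $k$-fold integration with kernel $(z-\zeta)^{k-1}/(k-1)!$ gains $(1+\abs{z})^{-k}$. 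These three effects cancel to order $0$, which is exactly why the exponent $k-i$ appears in the hypothesis. Concretely I would parametrize the integral along the segment $\zeta=tz/\abs{z}$, use $\abs{z-\zeta}^{k-1}\le\abs{z}^{k-1}$ together with the pointwise coefficient bound, estimate $f^{(i)}$ by Cauchy/Fock derivative estimates, and then pass to the norm $\nm{\cdot}_{p}$ by Minkowski's integral inequality. Making this estimate rigorous and uniform, so that the correct powers of $(1+\abs{z})$ survive against the Gaussian weight, is the main obstacle; this is where the techniques motivated by \cite{zj} enter.

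Granting the estimate, the hypothesis $C\sum_{i=0}^{k-1}D_{i}M_{i}<1$ says precisely that $\sum_{i}J_{i}$ is a contraction on $F^{p}$, so $\mathrm{id}+\sum_{i}J_{i}$ is boundedly invertible there by the Neumann series and $T$ has a unique fixed point $g=(\mathrm{id}+\sum_{i}J_{i})^{-1}(P+\varphi_{k})\in F^{p}$. It then remains to identify $g$ with a genuine solution of $(1)$: differentiating $g=P+\varphi_{k}-\sum_{i}J_{i}(g)$ exactly $k$ times and using that $k$-fold integration is a right inverse of $k$-fold differentiation, together with $P^{(k)}=0$ and $\varphi_{k}^{(k)}=A_{k}$, shows that $g$ solves $(1)$, while the normalizations give $g^{(j)}(0)=P^{(j)}(0)$ for $j<k$. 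By the existence–uniqueness theorem for linear differential equations with entire coefficients, $g$ is the unique entire solution with these initial values. Letting $P$ range over all polynomials of degree $<k$ realizes every choice of initial data, hence every solution of $(1)$ coincides with the corresponding $g\in F^{p}$, proving that all solutions belong to $F^{p}$.
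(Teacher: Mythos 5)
Your architecture is genuinely different from the paper's, and in one respect better. The paper never forms an integral equation or invokes a fixed point: it takes an arbitrary solution $f$ (entire by Lemma 2.1), applies the equivalent-norm theorem of \cite{zj} (Lemma 2.2) with $m=k$ to bound $\|f\|_p$ by the weighted integral of $f^{(k)}(z)(1+|z|)^{-k}$, substitutes the equation for $f^{(k)}$, pulls out the constants $M_i:=\sup_{z\in\mathbb{C}}|A_i(z)|(1+|z|)^{-(k-i)}$, applies Lemma 2.2 again with $m=i$ (and with $m=k$ to $\varphi_k$ for the $A_k$-term), and finally absorbs the resulting term $C\sum_i D_iM_i\|f\|_p$ into the left-hand side using the smallness hypothesis. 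Your Neumann-series route buys something real here: the paper's absorption step implicitly requires $\|f\|_p<\infty$ a priori (its remark that ``$\|f\|_p=\infty$ contradicts the condition'' is not an argument, since the pre-absorption inequality is vacuously true when both sides are infinite), whereas you construct, for each choice of initial data, a solution lying in $F^p$ and then identify it with the actual solution by ODE uniqueness; no a priori finiteness is needed, only completeness of $F^p$.

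However, your proposal has a genuine gap exactly where you place ``the main obstacle'': the bound $\nm{J_i(f)}_p\le CD_iM_i\nm{f}_p$ is supported only by an order-counting heuristic and a plan involving segment parametrization and pointwise Cauchy estimates. That plan is weaker than you suggest (pointwise derivative estimates against the Gaussian weight lose factors you cannot afford, and checking the claim on monomials is not a proof), and as written the proposal does not close. The good news is that no hands-on estimate is needed: the bound is precisely two applications of the result you allude to from \cite{zj}, which is Lemma 2.2 of the paper. Since $J_i(f)$ vanishes to order $k$ at the origin and $(J_i(f))^{(k)}=A_if^{(i)}$, Lemma 2.2 with $m=k$ gives
$$
\nm{J_i(f)}_p\le C\brac{\int_{\mathbb{C}}\abs{A_i(z)f^{(i)}(z)\frac{e^{-\frac{1}{2}|z|^2}}{(1+|z|)^{k}}}^{p}dm(z)}^{\frac{1}{p}}\le CM_i\brac{\int_{\mathbb{C}}\abs{f^{(i)}(z)\frac{e^{-\frac{1}{2}|z|^2}}{(1+|z|)^{i}}}^{p}dm(z)}^{\frac{1}{p}},
$$
and Lemma 2.2 with $m=i$ bounds the last factor by $D_i\nm{f}_p$. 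With this substitution your proof is complete and runs on the same quantitative engine as the paper's; only the surrounding logic (contraction plus uniqueness versus direct absorption) differs.
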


\begin{thm}\label{1} {\normalfont} Suppose that  $A_{j}(z)$ are entire functions, $j=1,\cdots,k$. If    $$\sum_{i=0}^{k} C_{i}\bigg(\sum_{j=0}^{k-1}\sup_{z\in\mathbb{C}}\bigg\{\frac{|A_{j}|}{(1+|z|)^{k-i-j}}\bigg\}E_{j}\bigg)<1$$ and the $k$th order primitive  function $\varphi_{k}(z)$ of $A_{k}(z)$ belongs to $F^{p,k}$, where $E_{i}$ and $G_{j}$ only depend on $p,k$. Then all solutions of $(1)$ belong to $F^{p,k}$.
\end{thm}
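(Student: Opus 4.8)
The plan is to rewrite the nonhomogeneous equation $(1)$ as a fixed-point equation on $F^{p,k}$ and to read the hypothesis as the contraction condition that makes Picard's successive approximations converge. Writing $I^{m}$ for the $m$-fold integration operator $I^{m}[h](z)=\int_{0}^{z}\frac{(z-\zeta)^{m-1}}{(m-1)!}h(\zeta)\,d\zeta$ and integrating $(1)$ $k$ times, every solution $f$ satisfies $f=P+\varphi_{k}-T[f]$, where $P$ is the degree-$(k-1)$ Taylor polynomial carrying the initial data at the origin, $\varphi_{k}$ is the $k$th primitive of $A_{k}$, and
$$T[f]:=\sum_{j=0}^{k-1}I^{k}\!\left[A_{j}f^{(j)}\right].$$
Since $P$ is a polynomial it lies in $F^{p,k}$, and $\varphi_{k}\in F^{p,k}$ by hypothesis; hence it suffices to show that $T$ is a contraction on $F^{p,k}$. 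The contraction constant will be exactly the left-hand side of the assumed inequality, so the unique fixed point, and by linearity every solution of $(1)$, will lie in $F^{p,k}$.

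To estimate $T$ I would use that, by definition, $\|f\|_{F^{p,k}}=\sum_{\alpha=0}^{k}\|f^{(\alpha)}\|_{p}$ (comparable to $\|z^{k}f\|_{p}$ by Theorem~A of \cite{zhu}). Since $\bigl(I^{k}[h]\bigr)^{(i)}=I^{k-i}[h]$ for $0\le i\le k$, differentiation of $T[f]$ gives
$$\bigl(T[f]\bigr)^{(i)}=\sum_{j=0}^{k-1}I^{k-i}\!\left[A_{j}f^{(j)}\right],\qquad 0\le i\le k,$$
and therefore
$$\|T[f]\|_{F^{p,k}}=\sum_{i=0}^{k}\bigl\|\bigl(T[f]\bigr)^{(i)}\bigr\|_{p}\le\sum_{i=0}^{k}\sum_{j=0}^{k-1}\bigl\|I^{k-i}[A_{j}f^{(j)}]\bigr\|_{p}.$$
This double sum is precisely the source of the two indices $i$ and $j$ in the statement: $i$ ranges over the $k+1$ derivatives that make up the Fock--Sobolev norm, and $j$ over the $k$ coefficients of $(1)$.

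The technical core is therefore a single estimate for iterated integration against the multiplier $A_{j}$ in the classical Fock space. Starting from the pointwise bound $|h(z)|e^{-|z|^{2}/2}\lesssim\|h\|_{p}$ valid in $F^{p}$, together with the facts that integration is bounded on $F^{p}$ and that the multiplier $(1+|z|)$ acts on the Fock scale comparably to one differentiation, I would prove
$$\bigl\|I^{m}[A_{j}g]\bigr\|_{p}\le E_{j}\,\sup_{z\in\mathbb{C}}\left\{\frac{|A_{j}(z)|}{(1+|z|)^{m}}\right\}\|g\|_{p},\qquad E_{j}=E_{j}(p,k).$$
Applying this with $g=f^{(j)}$ and $m=k-i$, and using $\|f^{(j)}\|_{p}\le\|f\|_{F^{p,k}}$, already bounds each term of the double sum and yields a contraction constant governed by $\sup_{z}|A_{j}|/(1+|z|)^{k-i}$. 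To sharpen the exponent to the stated $k-i-j$ one trades the $j$ derivatives producing $f^{(j)}$ for $j$ powers of $(1+|z|)$ in the Fock scale, transferring them onto the multiplier; collecting the resulting constants into $C_{i}$ and $E_{j}$ turns $\|T[f]\|_{F^{p,k}}$ into the left-hand side of the hypothesis times $\|f\|_{F^{p,k}}$, so the hypothesis being $<1$ is exactly contraction.

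The step I expect to be the main obstacle is this integration--multiplication estimate with uniform constants over the whole range $0\le i\le k$, $0\le j\le k-1$. One must control simultaneously the degree gained from multiplying by $A_{j}$, the $j$-fold differentiation hidden in $f^{(j)}$, and the smoothing of $I^{k-i}$, and show the net effect is bounded exactly when the displayed supremum is finite. For $p=2$ this can be checked transparently on the orthonormal basis $\{e_{n}\}$, where $I$ acts as a weighted shift with weights $(n+1)^{-1/2}$ and multiplication by $z^{a}$ multiplies norms by $\asymp n^{a/2}$; for $p\neq2$ the same balance must instead be recovered from the pointwise estimates and the boundedness of integration on $F^{p}$, and verifying that the constants depend only on $p$ and $k$ --- and that the successive approximants stay in $F^{p,k}$ so that the fixed point does too --- is the delicate part.
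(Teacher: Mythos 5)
Your proposal is correct in substance, but it follows a genuinely different route from the paper. The paper works directly with a given solution $f$: it applies the equivalent-norm result of \cite{zj} (its Lemma 2.2) to $\|f\|_{F^{p,k}}$, replaces $f^{(k)}$ by $A_k-\sum_{j}A_jf^{(j)}$ using equation (1), runs Minkowski's inequality and the same $(1+|z|)$-power bookkeeping you describe, and arrives at $\|f\|_{F^{p,k}}\le D_1+c\,\|f\|_{F^{p,k}}+G\,\|\varphi_k\|_{F^{p,k}}$ with $c<1$, then absorbs the middle term. You instead integrate the equation $k$ times and run a Banach fixed-point (Picard) argument on $F^{p,k}$. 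The analytic core is identical: your key estimate $\|I^{m}[A_jf^{(j)}]\|_p\le C\sup_{z}\{|A_j(z)|/(1+|z|)^{m-j}\}\,\|f\|_p$ is exactly what the paper's Lemma 2.2 yields, applied once to $h=I^{m}[A_jf^{(j)}]$ (whose derivatives at $0$ up to order $m-1$ vanish and whose $m$th derivative is $A_jf^{(j)}$) and once more to $f$ itself to trade $f^{(j)}$ for $j$ powers of $(1+|z|)$, with constants depending only on $p$ and $k$; so the step you flag as the main obstacle is available off the shelf and needs no separate basis analysis for $p=2$. What your route buys is rigor at the final step: the paper concludes from $\|f\|_{F^{p,k}}(1-c)\le D_1+G\,\|\varphi_k\|_{F^{p,k}}$ that the norm is finite by asserting that $\|f\|_{F^{p,k}}=\infty$ "contradicts the condition", but if the norm is infinite the derived inequality merely reads $\infty\le\infty$ and the subtraction is illegitimate; your contraction argument produces a fixed point already inside $F^{p,k}$ and thus bypasses this weakness. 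The one point you should make explicit is the identification of the fixed point with the given solution: "by linearity" is not the right reason. For each solution $f$ of (1), take $P$ to be its Taylor polynomial of degree $k-1$; the fixed point $g^{*}\in F^{p,k}$ of $g\mapsto P+\varphi_k-T[g]$ satisfies, after $k$ differentiations, equation (1) with the same initial data as $f$ (since $\varphi_k$ and $T[g^{*}]$ vanish to order $k-1$ at the origin), hence $g^{*}=f$ by uniqueness of solutions of the initial value problem, and so $f\in F^{p,k}$.
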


Note that, unfortunately, the assumptions in Theorem~1.1 force the coefficients $A_j$ to be polynomials such that $\deg(A_j)\leq k-j$. Moreover, note that the assumption in Theorem~1.2 forces the coefficients $A_j$ to be polynomials and satisfy
$$
\deg(A_j)\leq k-i-j,\quad 0\leq i\leq k.
$$
Hence, $A_1,\ldots,A_{k-1}\equiv 0$ and $A_0\in\mathbb{C}$.

\indent  Next,  we obtain the following theorem by the study of \cite{r}.
\begin{thm}\label{1}Suppose that  $A_{k}(z)$ is not a constant function and there exists a nonconstant function among $A_{j}$, $j=0,\cdots,k-1.$ If $\phi$ is in the class $\mathcal{I}$ and there exists a sufficiently large $r_{0}$ such that $$ |A_{j}(re^{i\theta})|\leq \frac{\phi^{\frac{1}{2}}(r)}{r}
$$ for $r>r_{0}$.  Then all solutions of  $(1)$ belong to  $F^{p,q} _{\phi}.$
\end{thm}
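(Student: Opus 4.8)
The plan is to control every solution of $(1)$ by one radial growth estimate and then to check that, under the hypothesis $|A_j(re^{i\theta})|\le\phi^{1/2}(r)/r$, this growth is negligible against the weight $e^{-p\phi}$ occurring in the norm of $F^{p,q}_\phi$. Thus the proof splits into three pieces: a pointwise bound coming from the comparison method behind \cite{r}, the analytic lemma $\Psi(r)=o(\phi(r))$ for the resulting growth function $\Psi$, and a final radial integration.

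First I would extract the pointwise bound. Fix $\theta$ and set $b(t)=\max_{0\le j\le k}|A_j(te^{i\theta})|$. Comparing a solution $f$ of $(1)$ along the segment $[0,re^{i\theta}]$ with the solution $w\ge 0$ of the real majorant initial value problem
\[
w^{(k)}=b(r)\bigl(w^{(k-1)}+\cdots+w'+w\bigr)+b(r),\qquad w^{(i)}(0)=|f^{(i)}(0)|,
\]
Herold's comparison theorem gives $|f(re^{i\theta})|\le w(r)$. Since all $w^{(i)}\ge 0$, the auxiliary function $W:=1+\sum_{i=0}^{k-1}w^{(i)}$ satisfies $W'\le(1+b)W$, so Gronwall's lemma yields $\log^+ w(r)\le C+\int_0^r(1+b(t))\,dt$. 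As $b$ is bounded on $[0,r_0]$ and $b(t)\le\phi^{1/2}(t)/t$ for $t>r_0$, this collapses, uniformly in $\theta$, to
\[
\log^+|f(re^{i\theta})|\le C\bigl(1+\Psi(r)\bigr),\qquad \Psi(r):=\int_{0}^{r}\frac{\phi^{1/2}(t)}{t}\,dt,
\]
the harmless linear term being absorbed into $\Psi$ because $\phi^{1/2}(t)/t\to\infty$ gives $r=o(\Psi(r))$.

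The central estimate is then $\Psi(r)=o(\phi(r))$. Since $\phi\in\mathcal{I}$ forces $\tau$ comparable to $(\Delta\phi)^{-1/2}$ and $\tau\to0$, we have $\Delta\phi\to\infty$; writing $\Delta\phi=\tfrac1r(r\phi')'$ and integrating twice shows $\phi(r)/r^2\to\infty$, and in particular $\phi$ is increasing for large $r$. Monotonicity gives $\phi^{1/2}(t)\le\phi^{1/2}(r)$ for $r_1\le t\le r$, whence
\[
\Psi(r)\le C_1+\phi^{1/2}(r)\int_{r_1}^{r}\frac{dt}{t}\le C_1+\phi^{1/2}(r)\log r,
\]
so that $\Psi(r)/\phi(r)\le C_1/\phi(r)+\log r/\phi^{1/2}(r)\to0$, using $\phi^{1/2}(r)\ge r\gg\log r$. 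This lemma is the only place where the defining growth of the class $\mathcal{I}$ is really needed.

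Finally I would integrate in polar coordinates. Combining the two displays,
\[
\|f\|_{F^{p,q}_\phi}^{p}\le 2\pi\int_{0}^{\infty}e^{pC(1+\Psi(r))}e^{-p\phi(r)}\phi^{q}(r)\,r\,dr .
\]
Since $C\Psi(r)=o(\phi(r))$, there is $R$ with $pC(1+\Psi(r))\le\tfrac{p}{2}\phi(r)$ for $r\ge R$, so the tail integrand is dominated by $e^{-\frac{p}{2}\phi(r)}\phi^{q}(r)r\le e^{-\frac{p}{4}\phi(r)}r$ for large $r$; as $\phi(r)\ge r^2$ eventually, this is integrable, and the piece over $[0,R]$ is finite by continuity. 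The same computation handles every solution of the nonhomogeneous equation, since $A_k$ obeys the same bound and enters the majorant ODE only additively, while the nonconstancy hypotheses play no role in the estimate itself. The step I expect to be the main obstacle is the careful invocation of Herold's comparison theorem in the \emph{nonhomogeneous} case: one must verify that the majorant initial value problem above genuinely dominates $|f|$ along every ray and uniformly in $\theta$, after which the remaining steps are routine estimation.
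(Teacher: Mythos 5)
Your argument is correct in substance, and it shares the paper's skeleton: a pointwise exponential bound for $|f|$ along rays whose exponent is $o(\phi(r))$, followed by integration in polar coordinates against $e^{-p\phi}\phi^q$. The difference lies in the key ingredient. The paper does not touch Herold's theorem or Gronwall's lemma directly; it cites Lemma 2.3 (the growth estimate of Heittokangas--Korhonen--R\"atty\"a for nonhomogeneous equations), which is exactly the statement you reconstruct, so the step you flag as the ``main obstacle'' --- dominating $|f|$ along every ray by a real majorant IVP in the nonhomogeneous case --- is precisely what that lemma already provides, and you could cite it and delete your first step. Working from scratch, your cruder majorant (no $(k-j)$-th roots on the coefficients) gives the exponent $\Psi(r)\le C+\phi^{1/2}(r)\log r$, whereas the paper's use of $|A_j|^{1/(k-j)}$ followed by the bound $(\phi^{1/2}(s)/s)^{1/(k-j)}\le \phi^{1/2}(r)s^{-1/k}$ gives the exponent $C\phi^{1/2}(r)r^{1-1/k}$; both are $o(\phi(r))$ for exactly the same reason, namely $\phi(r)/r^2\to\infty$ (Lemma 2.6, which you re-derive from $\Delta\phi\to\infty$ just as the paper does). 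Two points where your route is actually cleaner: you dispense with the paper's Lemma 2.5, handling the compact part of the norm integral by continuity, which suffices; and your remark that the nonconstancy hypotheses on $A_k$ and on some $A_j$ play no role is correct --- the paper uses them only to absorb the additive terms ``$+1$'' and ``$\delta$'' into the max expressions, a step that is in fact shaky (for a nonconstant entire $A_k$, the quantity $\max_{0\le x\le r}|A_k(xe^{i\theta})|$ need not exceed $1$ along every ray), while your Gronwall bookkeeping carries these terms harmlessly. One cosmetic slip to fix: define $\Psi(r)=\int_{r_0}^{r}\phi^{1/2}(t)t^{-1}\,dt$ rather than $\int_{0}^{r}$, since the integrand need not be integrable at the origin; this is how you actually use $\Psi$ (the piece over $[0,r_0]$ is a constant because the $A_j$ are entire), so nothing in the argument breaks.
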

\indent  Next, we give sufficient conditions for some coefficients to be in some weighted Fock  space.
\begin{thm}\label{1} Suppose that  $A_{j}(z)$  are constant function, $j=0,\cdots,k-1$. If there exists a solution of  $(1)$  belonging to $F^{p,k}$. Then the coefficient $A_{k}(z)$ of  $(1)$ belongs to $F^{p}$.

\end{thm}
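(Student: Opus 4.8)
The plan is to solve equation $(1)$ algebraically for $A_k$ and then to read off membership in $F^p$ directly from the hypothesis, using only the definition of the Fock--Sobolev norm together with the fact that $F^p$ is a vector space. Since the solution $f$ is assumed to lie in $F^{p,k}$ and the coefficients $A_0,\dots,A_{k-1}$ are constants, the right-hand side $A_k$ will turn out to be a fixed finite linear combination, with scalar coefficients, of the derivatives $f,f',\dots,f^{(k)}$, each of which the hypothesis lets us control.

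First I would record the exact consequence of $f\in F^{p,k}$. By the definition of the norm $\|f\|_{F^{p,k}}=\sum_{\alpha\le k}\|f^{(\alpha)}\|_p$, finiteness of this sum of nonnegative terms forces each individual summand to be finite, whence $f^{(\alpha)}\in F^p$ for every $0\le\alpha\le k$. In particular, both $f$ itself and its top-order derivative $f^{(k)}$ belong to $F^p$.

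Next I would rearrange $(1)$ into the form
$$
A_k=f^{(k)}+\sum_{j=0}^{k-1}A_j\,f^{(j)},
$$
and observe that, because each $A_j$ with $0\le j\le k-1$ is a constant, the right-hand side is a finite linear combination of $f^{(0)},\dots,f^{(k)}$, all of which lie in $F^p$ by the previous step. Applying the triangle inequality for the $F^p$ norm (valid for every $p\in[1,\infty]$) then gives
$$
\|A_k\|_p\le\|f^{(k)}\|_p+\sum_{j=0}^{k-1}|A_j|\,\|f^{(j)}\|_p<\infty,
$$
so that $A_k\in F^p$. That $A_k$ is entire is automatic, as it is a finite sum of constant multiples of the entire functions $f^{(j)}$.

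I do not anticipate a genuine obstacle here: once the hypothesis is unpacked, the statement amounts to the closure of $F^p$ under scalar multiplication and finite sums. The only point deserving a moment's care is the first step, namely confirming that membership in $F^{p,k}$ delivers control of the \emph{top} derivative $f^{(k)}\in F^p$ and not merely $f\in F^p$; the whole argument hinges on this, since $f^{(k)}$ is the dominant term on the right-hand side. Everything after that is the triangle inequality.
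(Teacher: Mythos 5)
Your proof is correct and follows essentially the same route as the paper: both solve $(1)$ for $A_k$, apply the triangle (Minkowski) inequality with the constant coefficients $A_0,\dots,A_{k-1}$ pulled out as scalars, and conclude $\|A_k\|_p<\infty$ from the fact that $f\in F^{p,k}$ makes every $\|f^{(\alpha)}\|_p$, $0\le\alpha\le k$, finite. Your explicit observation that the $F^{p,k}$ norm controls the top derivative $f^{(k)}$ is exactly the point the paper uses implicitly in its final line.
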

\begin{thm}\label{1}   Let  $\phi$ be in the class $\mathcal{I}$ and moreover $\phi(r)=e^{r}$.  Suppose that $A _{j}(z)$  belongs to $F_{\frac{1}{2}e^{r}}^{p}$, $j=1,\cdots,k-1$. If there exists a solution of $(1)$ such that $$\limsup_{r\rightarrow \infty}\frac{|f(re^{i\theta})|}{e^{e^{r}}}<1$$ and $$\frac{A_{k}(re^{i\theta})}{f(re^{i\theta})}\in F_{e^{r}}^{p,q}.$$
 Then  $A_{0}(z)\in F_{e^{r}}^{p,q}$.
\end{thm}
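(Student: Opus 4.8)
The plan is to solve equation $(1)$ algebraically for $A_{0}$ and then estimate the weighted Fock--Sobolev norm of the resulting expression term by term. Since $f$ is a solution, rearranging $(1)$ and dividing by $f$ gives
$$
A_{0} = \frac{A_{k}}{f}-\frac{f^{(k)}}{f}-\sum_{j=1}^{k-1}A_{j}\,\frac{f^{(j)}}{f}.
$$
By the elementary inequality $|a_{0}+\cdots+a_{k}|^{p}\le C_{k,p}\sum_{i}|a_{i}|^{p}$, bounding $\|A_{0}\|_{F_{e^{r}}^{p,q}}$ reduces to bounding the $F_{e^{r}}^{p,q}$-norm of each summand separately. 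The first summand $A_{k}/f$ lies in $F_{e^{r}}^{p,q}$ by hypothesis, so the whole matter comes down to controlling the terms $f^{(k)}/f$ and $A_{j}f^{(j)}/f$.

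First I would record two pointwise growth estimates. From $\limsup_{r\to\infty}|f(re^{i\theta})|/e^{e^{r}}<1$ we have $|f(z)|\lesssim e^{e^{|z|}}$ for large $|z|$; and since each $A_{j}\in F_{\frac{1}{2}e^{r}}^{p}$, with $\frac{1}{2}e^{r}$ again in the class $\mathcal{I}$, the boundedness of the point evaluations from \cite{O.C} yields the pointwise bound $|A_{j}(z)|\lesssim \tau(z)^{-2/p}e^{\frac{1}{2} e^{|z|}}$, where $\tau(z)\sim e^{-|z|/2}$ for the weight $e^{r}$. The decisive feature here is the factor $e^{\frac{1}{2} e^{|z|}}$: on raising $|A_{j}|$ to the $p$-th power it contributes $e^{\frac{p}{2}e^{|z|}}$, which against the weight $e^{-p e^{|z|}}$ built into $\|\cdot\|_{F_{e^{r}}^{p,q}}$ leaves a surviving factor $e^{-\frac{p}{2}e^{|z|}}$. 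This is precisely why the coefficients are assumed to lie in the \emph{half-weight} space $F_{\frac{1}{2}e^{r}}^{p}$: the leftover super-exponential decay will absorb every polynomial-exponential factor in $|z|$ produced by the logarithmic derivatives and by $\tau^{-2/p}$.

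The key auxiliary input is a bound on the logarithmic derivatives $f^{(j)}/f$. For $f$ of the prescribed growth I would invoke a pointwise logarithmic-derivative estimate in the spirit of the growth results of \cite{r} (a Gundersen-type bound), of the form $|f^{(j)}(z)/f(z)|\lesssim e^{c_{j}|z|}$ for suitable constants $c_{j}$ and all $z$ outside an exceptional set. Granting this, each remaining integral has the schematic shape $\int_{\mathbb{C}}e^{a|z|}e^{-\frac{p}{2}e^{|z|}}\,dm(z)$ for the terms $A_{j}f^{(j)}/f$, and $\int_{\mathbb{C}}e^{a|z|}e^{-p e^{|z|}}\,dm(z)$ for the term $f^{(k)}/f$; passing to polar coordinates, both converge because the double-exponentially decaying weight dominates any factor $e^{a|z|}$. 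Combining the three estimates then yields $A_{0}\in F_{e^{r}}^{p,q}$.

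The hard part will be making the logarithmic-derivative step rigorous near the zeros of $f$. The hypothesis on $f$ is only an upper bound, so $f^{(j)}/f$ genuinely has a pole at each zero of $f$, and $\int|f^{(j)}/f|^{p}$ can diverge locally once $jp\ge 2$. To close this gap I would either exploit the hypothesis $A_{k}/f\in F_{e^{r}}^{p,q}$, which already encodes a compatibility between the zeros of $f$ and integrability, or estimate the (logarithmic) measure of the exceptional set in the Gundersen-type bound and show that its contribution to the norm remains finite. Everything else is a routine combination of the two pointwise bounds with the fast decay of the Fock weight.
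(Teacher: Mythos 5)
Your overall strategy coincides with the paper's: solve $(1)$ for $A_0$, bound the logarithmic derivatives by a Gundersen-type estimate, and let the double-exponential decay of the weight $e^{-pe^{|z|}}$ (respectively the leftover factor $e^{-\frac{p}{2}e^{|z|}}$ in the terms containing $A_j$) absorb all single-exponential factors; your convergence computations for the three kinds of terms are correct. However, the step you explicitly leave open --- ``making the logarithmic-derivative step rigorous near the zeros of $f$'' --- is not an optional refinement but the crux, and neither of your proposed patches closes it. The paper's Lemma 2.4 (Gundersen's estimate, \cite{GG}) resolves it through the specific geometry of its exceptional set: it is a set $W\subset[0,2\pi]$ of \emph{angles} of linear measure zero, and along every ray $\theta\notin W$ the bound $\big|f^{(j)}(re^{i\theta})/f(re^{i\theta})\big|\leq C\big(T(\alpha r,f)\frac{\log^{\alpha}r}{r}\log T(\alpha r,f)\big)^{j}$ holds for all large $r$. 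Since $W\times[0,\infty)$ has zero planar measure, it contributes nothing to $\int_{\mathbb{C}}|A_0|^{p}e^{-pe^{|z|}}e^{q|z|}\,dm(z)$, and any ray that meets zeros of $f$ arbitrarily far out (where $f^{(j)}/f$ has poles) is automatically among the exceptional angles, because the zeros of a nontrivial entire function lie on at most countably many rays. So the ``poles of $f^{(j)}/f$'' problem never enters the area integral at all.

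By contrast, your patch of exploiting $A_k/f\in F^{p,q}_{e^r}$ cannot work as stated: that hypothesis constrains only $A_k/f$ and gives no control of $f^{(j)}/f$ near zeros of $f$. Your alternative patch --- an exceptional set of finite logarithmic measure in $r$ --- is worse: on such a radial set you have no bound on $A_0$ whatsoever (recall $A_0$ is precisely the function whose norm you are estimating), and a radial set of positive measure sweeps out a region of infinite area, so its contribution cannot be discarded. (To be fair, the paper is itself slightly cavalier here: Gundersen's $r_0$ depends on $\theta$, while the paper asserts a uniform $r_1$; but the repair goes through the angular, not radial, structure of $W$.) Your remaining deviations from the paper are harmless: you bound $|A_j(z)|\lesssim \tau(z)^{-2/p}e^{\frac{1}{2}e^{|z|}}$ pointwise via point evaluations, whereas the paper keeps $|A_j|^{p}$ under the integral and uses $\int_{\mathbb{C}}|A_j|^{p}e^{-\frac{p}{2}e^{|z|}}\,dm(z)<\infty$ directly, which is simpler and avoids importing a point-evaluation lemma that the paper never states.
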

\indent Motivated by the study of \cite{JHR} and \cite{O.C}, we  consider that sufficient conditions for the solutions of  the second order case
$$f''+Af=0\eqno{(2)}$$ to be in some weighted Fock  space are given by Bergman reproducing kernel and  coefficient $A$.

\begin{thm}\label{1} Let  $\phi$ be in the class $\mathcal{I}$, and let $|\big(\int_{0}^{z}A(\zeta)d\zeta\big)|e^{-\phi(z)}$ be bounded in $z\in\mathbb{C}$. Suppose that
$$
X_{K}(A)=\sup_{z\in \mathbb{C}}\bigg|\int_{\mathbb{C}}\bigg(\int_{0}^{z}\overline
{K'_{\zeta}(\eta)}A(\zeta)d\zeta\bigg)(1+\phi'(\eta))^{-2}e^{-\phi(\eta)}dm(\eta)\bigg|e^{-\phi(z)}<1.
$$ Then the derivative $f'$ of each solution $f$ of $(1)$ belongs to $F_{\phi}^{\infty}$.
\end{thm}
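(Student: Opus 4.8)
The plan is to convert the differential equation $(2)$ into an integral fixed-point equation for $g:=f'$ in the space $F_\phi^\infty$, whose associated operator turns out to be a contraction precisely because $X_K(A)<1$. First I would integrate $(2)$ once. Since $f''=-Af$, I write $\varphi(z)=\int_0^z A(\zeta)\,d\zeta$ and use $f'(z)=f'(0)-\int_0^z A(\zeta)f(\zeta)\,d\zeta$; after an integration by parts in $\zeta$ the initial data is carried by a boundary contribution of the form $\varphi(z)f(z)$. The hypothesis that $|\varphi(z)|e^{-\phi(z)}=\big|\int_0^z A\big|\,e^{-\phi(z)}$ is bounded is exactly what guarantees that this inhomogeneous term lies in $F_\phi^\infty$.

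Next I would insert the reproducing kernel of $F_\phi^2$. Starting from $f(\zeta)=\int_{\mathbb{C}} f(\eta)\overline{K_\zeta(\eta)}e^{-2\phi(\eta)}\,dm(\eta)$ and its $\zeta$-derivative $f'(\zeta)=\int_{\mathbb{C}} f(\eta)\overline{K'_\zeta(\eta)}e^{-2\phi(\eta)}\,dm(\eta)$, and then integrating by parts in the area variable $\eta$ so as to transfer one derivative onto the kernel, I expect to reach a representation of the form
$$
f'(z)=H(z)+\int_{\mathbb{C}} f'(\eta)\Big(\int_0^z \overline{K'_\zeta(\eta)}A(\zeta)\,d\zeta\Big)(1+\phi'(\eta))^{-2}e^{-2\phi(\eta)}\,dm(\eta),
$$
with $H\in F_\phi^\infty$. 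The factor $(1+\phi'(\eta))^{-2}$ is the weight produced by the area integration by parts against $e^{-2\phi}$, and Fubini's theorem (legitimate thanks to the growth control on $f$, $A$ and the kernel available for weights in $\mathcal{I}$) is what allows the path integral in $\zeta$ to be pulled outside the area integral in $\eta$.

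Denoting by $T$ the integral operator on the right, I would then estimate its $F_\phi^\infty\to F_\phi^\infty$ norm. Multiplying by $e^{-\phi(z)}$ and bounding $|f'(\eta)|e^{-\phi(\eta)}\le\|f'\|_{F_\phi^\infty}$ inside the integral pulls the norm out and leaves exactly
$$
\sup_{z\in\mathbb{C}}\Big|\int_{\mathbb{C}}\Big(\int_0^z \overline{K'_\zeta(\eta)}A(\zeta)\,d\zeta\Big)(1+\phi'(\eta))^{-2}e^{-\phi(\eta)}\,dm(\eta)\Big|e^{-\phi(z)}=X_K(A),
$$
so that $\|Tg\|_{F_\phi^\infty}\le X_K(A)\,\|g\|_{F_\phi^\infty}$. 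Since $X_K(A)<1$, the map $g\mapsto H+Tg$ is a contraction on $F_\phi^\infty$, and its unique fixed point $g^{*}$ lies in $F_\phi^\infty$. Finally I would verify that $\tilde f(z)=f(0)+\int_0^z g^{*}$ solves $(2)$ with the same initial data as $f$; by uniqueness of the initial value problem for the entire equation $(2)$ one has $\tilde f=f$, whence $f'=g^{*}\in F_\phi^\infty$. Casting the argument as a fixed point, rather than a direct a priori estimate, is what lets me avoid assuming $\|f'\|_{F_\phi^\infty}<\infty$ in advance and thereby sidestep a circular self-improving inequality.

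I expect the main obstacle to lie in establishing the exact representation in the second step: the area integration by parts in $\eta$ that simultaneously shifts the derivative onto $\overline{K_\zeta}$ and generates the weight $(1+\phi'(\eta))^{-2}$, together with the justification of Fubini and of the finiteness needed to identify $f'$ with the fixed point. All of these rest on the pointwise kernel and derivative-kernel estimates available for weights $\phi\in\mathcal{I}$ through the associated function $\tau\approx(\Delta\phi)^{-1/2}$, which is where the restriction to the class $\mathcal{I}$ becomes indispensable.
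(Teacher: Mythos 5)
Your proposal shares the paper's entire core: the same once-integrated equation, the same insertion of the $F_\phi^2$ reproducing kernel, the same use of the Littlewood--Paley identity (the paper's Lemma 2.7) plus Fubini to produce the operator $T$ with weight $(1+\phi'(\eta))^{-2}$ and the inhomogeneous term $H(z)=f'(0)-f(0)\delta_0^{-2}\int_0^z A(\zeta)\,d\zeta$, whose membership in $F_\phi^\infty$ is exactly what the hypothesis on $|\int_0^z A|e^{-\phi(z)}$ guarantees. Where you genuinely diverge is the endgame, and your version is the stronger one. The paper derives the self-improving inequality $\|f'\|_{F_\phi^\infty}\le X_K(A)\|f'\|_{F_\phi^\infty}+C$ and then dismisses the case $\|f'\|_{F_\phi^\infty}=\infty$ by asserting it ``contradicts the condition of Theorem 1.6''; this is circular, because the subtraction needed to reach $\|f'\|_{F_\phi^\infty}(1-X_K(A))\le C$ presupposes finiteness, and the inequality carries no information when the norm is infinite. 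Your Banach fixed-point argument in the complete space $F_\phi^\infty$, followed by identification of the fixed point $g^*$ with $f'$ through uniqueness of the \emph{initial value problem} for (2) --- and not through uniqueness of the fixed point, which you rightly note would be insufficient since $f'$ is not yet known to lie in $F_\phi^\infty$ --- is precisely the repair of that defect.

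Two caveats. First, the step you flag as the main obstacle is where your route still needs work: to show $\tilde f(z)=f(0)+\int_0^z g^*$ solves (2), you differentiate the fixed-point equation and must identify $f(0)\delta_0^{-2}+\int_{\mathbb{C}}g^*(\eta)\overline{K'_z(\eta)}(1+\phi'(\eta))^{-2}e^{-2\phi(\eta)}\,dm(\eta)$ with $\tilde f(z)$; that is Lemma 2.7 together with the reproducing property applied to $\tilde f$, which requires $\tilde f\in F_\phi^2$. This is not automatic from $g^*\in F_\phi^\infty$: the crude bound $|\tilde f(z)|\le|f(0)|+|z|\,\|g^*\|_{F_\phi^\infty}e^{\phi(z)}$ is not square-integrable against $e^{-2\phi}$, so one needs a finer estimate (say $\int_0^{|z|}e^{\phi(t)}\,dt\lesssim e^{\phi(z)}/\phi'(|z|)$ plus integrability of $(1+\phi')^{-2}$ over $\mathbb{C}$, which holds for $\phi(r)=r^\alpha$, $\alpha>2$, but should be checked across the class $\mathcal{I}$). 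To be fair, the paper is no better here: it applies the reproducing formula to the unknown solution $f$ via an unjustified interchange of limits with Taylor polynomials, so on this point the two arguments are equally incomplete. Second, you inherit an error of the paper in the contraction estimate: after bounding $|g(\eta)|e^{-\phi(\eta)}\le\|g\|_{F_\phi^\infty}$ inside the $\eta$-integral, what legitimately remains is $\int_{\mathbb{C}}\bigl|\int_0^z\overline{K'_\zeta(\eta)}A(\zeta)\,d\zeta\bigr|(1+\phi'(\eta))^{-2}e^{-\phi(\eta)}\,dm(\eta)$, with the modulus \emph{inside} the area integral; this dominates the quantity in $X_K(A)$ (modulus outside), so $\|T\|\le X_K(A)$ does not follow as stated, and the hypothesis actually needed for the contraction is that the modulus-inside version be less than $1$.
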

\begin{thm}\label{1}Let  $\phi$ be in the class $\mathcal{I}$, and let $|\int_{0}^{z}A(\zeta)d\zeta|e^{-\frac{1}{2}|z|^{2}}$ be bounded in $z\in\mathbb{C}$. Suppose that
$$
Y_{K}(A)=\sup_{z\in \mathbb{C}}\bigg|\int_{\mathbb{C}}\bigg(\int_{0}^{z}\overline
{K'_{\zeta}(\eta)}A(\zeta)d\zeta\bigg)(1+\phi'(\eta))^{-2}e^{-2\phi(\eta)+\frac{1}{2}|\eta|^{2}}dm(\eta)\bigg|e^{-\frac{1}{2}|z|^{2}}<1.
$$
Then the derivative $f'$ of each solutions $f$ of $(1)$ belongs to $F^{\infty}$.
\end{thm}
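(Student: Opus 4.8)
The plan is to mirror the proof of Theorem~1.7, the only genuine change being that the given solution is now measured in the classical Fock space $F^{\infty}$ rather than in $F_{\phi}^{\infty}$, while the reproducing kernel $K_{\zeta}$ that drives the representation stays the one of $F_{\phi}^{2}$. Concretely, I would realize $f'$ as a solution of a fixed-point equation $f'=w+Tf'$, where $w\in F^{\infty}$ is an explicit free term and $T$ is the integral operator whose kernel is exactly the one appearing in $Y_{K}(A)$; since I will show $\nm{T}_{F^{\infty}\to F^{\infty}}\le Y_{K}(A)<1$, a contraction (equivalently, an a priori bound) forces $f'\in F^{\infty}$.

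First I would integrate $f''+Af=0$ once to get $f'(z)=f'(0)-\int_{0}^{z}A(\zeta)f(\zeta)\,d\zeta$, and then rewrite $f(\zeta)$ by the derivative (Littlewood--Paley type) reproducing formula for $F_{\phi}^{2}$ furnished by the theory of \cite{O.C}, namely
$$
f(\zeta)=f(0)\,\overline{K_\zeta(0)}+\int_{\mathbb{C}} f'(\eta)\,\overline{K'_\zeta(\eta)}\,(1+\phi'(\eta))^{-2}e^{-2\phi(\eta)}\,dm(\eta),\qquad K'_\zeta=\partial_\eta K_\zeta .
$$
This single identity produces all three structural features of $Y_{K}(A)$: the derivative kernel $\overline{K'_{\zeta}(\eta)}$, the factor $(1+\phi'(\eta))^{-2}$, and the weight $e^{-2\phi(\eta)}$. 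Substituting it and interchanging the $\zeta$- and $\eta$-integrations yields
$$
f'(z)=\Big(f'(0)-f(0)\int_0^z A(\zeta)\,\overline{K_\zeta(0)}\,d\zeta\Big)-\int_{\mathbb{C}} f'(\eta)\Big(\int_0^z \overline{K'_\zeta(\eta)}A(\zeta)\,d\zeta\Big)(1+\phi'(\eta))^{-2}e^{-2\phi(\eta)}\,dm(\eta),
$$
which I abbreviate as $f'=w+Tf'$.

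Because the orthonormal basis is $e_{n}(z)=z^{n}\delta_{n}^{-1}$ with $e_{n}(0)=0$ for $n\ge1$, the quantity $\overline{K_\zeta(0)}$ reduces to a constant, so $|w(z)|e^{-\frac12|z|^2}\le |f'(0)|+c\,|f(0)|\,\big|\int_{0}^{z}A(\zeta)\,d\zeta\big|\,e^{-\frac12|z|^2}$ is bounded \emph{exactly} by the hypothesis that $|\int_{0}^{z}A|\,e^{-\frac12|z|^2}$ is bounded; hence $w\in F^{\infty}$. For the operator, inserting the pointwise bound $|f'(\eta)|\le\nm{f'}_{F^{\infty}}e^{\frac12|\eta|^2}$ converts the weight $e^{-2\phi(\eta)}$ into $e^{-2\phi(\eta)+\frac12|\eta|^2}$ and, after multiplying by $e^{-\frac12|z|^2}$ and taking the supremum in $z$, returns precisely $Y_{K}(A)$; thus $\nm{Tf'}_{F^{\infty}}\le Y_{K}(A)\nm{f'}_{F^{\infty}}$. (The analogous step in Theorem~1.7 uses $|f'(\eta)|\le\nm{f'}_{F_{\phi}^{\infty}}e^{\phi(\eta)}$, turning $e^{-2\phi}$ into $e^{-\phi}$ and $e^{-\frac12|z|^2}$ into $e^{-\phi(z)}$, which is the sole difference between the two arguments.) Writing $M=\nm{f'}_{F^{\infty}}$, the representation gives $M\le\nm{w}_{F^{\infty}}+Y_{K}(A)\,M$, so $M\le\nm{w}_{F^{\infty}}/(1-Y_{K}(A))<\infty$.

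The hard part is not the estimate but its a priori justification. The inequality $M\le\nm{w}_{F^{\infty}}+Y_{K}(A)M$ is only meaningful once $M<\infty$ is known, so I would either run the whole computation on a compact exhaustion and let the boundedness of $|\int_{0}^{z}A|\,e^{-\frac12|z|^2}$ supply the crude growth bound that makes $M$ finite before bootstrapping, or else set up $T$ as a genuine contraction on $F^{\infty}$, solve $u=w+Tu$ there, and identify its unique solution with $f'$. One must additionally justify the plane integration by parts underlying the derivative reproducing formula (vanishing of the boundary terms at infinity, guaranteed by the weight decay) and the Fubini interchange, and absorb the harmless gap between the modulus-inside operator bound and the modulus-outside quantity $Y_{K}(A)$ by a routine majorization. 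These technical points, rather than the contraction itself, are where the real care is needed.
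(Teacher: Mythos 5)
Your proposal follows essentially the same route as the paper's proof of Theorem 1.7: integrate the equation once, represent $f$ via the reproducing kernel together with the Littlewood--Paley identity of Lemma 2.7 (exactly as carried out in the proof of Theorem 1.6, which the paper invokes), note that $\overline{K_{\zeta}(0)}=\delta_{0}^{-2}$ is constant so the free term is controlled by the hypothesis on $|\int_{0}^{z}A(\zeta)\,d\zeta|e^{-\frac{1}{2}|z|^{2}}$, insert $|f'(\eta)|\leq\|f'\|_{\infty}e^{\frac{1}{2}|\eta|^{2}}$ to produce the weight $e^{-2\phi(\eta)+\frac{1}{2}|\eta|^{2}}$, and absorb via $Y_{K}(A)<1$. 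The only difference is that you explicitly flag the two technical gaps --- the a priori finiteness of $\|f'\|_{\infty}$ and the discrepancy between the modulus-inside operator bound and the modulus-outside quantity $Y_{K}(A)$ --- which the paper passes over silently (it pulls $\|f'\|_{\infty}$ out of the integral while leaving the modulus outside, and disposes of finiteness with the bare assertion that $\|f'\|_{\infty}=\infty$ ``contradicts the condition''), so your version is, if anything, more careful than the printed proof.
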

\begin{thm}\label{1}Let  $\phi$ be in the class $\mathcal{I}$ satisfied with Lemma $2.8$. Suppose that
$$
Z_{K}(A)=\sup_{\eta\in\mathbb{C}}\bigg|\int_{\mathbb{C}}\bigg(\int_{0}^{z}\overline{K_{\zeta}(\eta)}A(\zeta)d\zeta\bigg)\frac{e^{-2\phi(|z|)}}{(1+\phi'(|z|))^{2}}dm(z)\bigg|<1.
$$
Then all solutions $f$ of $(1)$ belong to $F_{\phi}^{2}$.
\end{thm}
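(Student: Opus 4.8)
The plan is to turn the problem of solving $f''+Af=0$ into a fixed-point problem on the Hilbert space $F_\phi^2$ and to recognize $Z_K(A)$ as the controlling constant for the associated integral operator. First I would record two preliminary reductions that make the target space manageable. Since $A$ is entire, standard existence theory gives that every solution $f$ of $f''+Af=0$ is entire; and since $\phi\in\mathcal I$ forces $e^{-2\phi(z)}$ to decay faster than any power of $|z|$ (e.g. $\phi(r)=r^{\alpha}$ with $\alpha>2$), every polynomial lies in $F_\phi^2$. Integrating $f''=-Af$ from the origin produces a Volterra-type representation of $f$ in which the free part $f_0(z)=f(0)+f'(0)z$ is a polynomial, hence already in $F_\phi^2$, so that $f$ is exactly the fixed point of a map of the form $g\mapsto f_0+Tg$, where $T$ is built from $A$ and the integration kernel.

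The second step is to realize $T$ as an integral operator on $F_\phi^2$ via the reproducing kernel. Substituting the reproducing identity $g(\zeta)=\int_{\mathbb C}g(w)\overline{K_\zeta(w)}e^{-2\phi(w)}\,dm(w)$ into the integrated equation, and using the Hermitian symmetry $\overline{K_\zeta(\eta)}=K_\eta(\zeta)$, moves the integration onto the kernel and makes the inner primitive
$$
\int_0^z \overline{K_\zeta(\eta)}A(\zeta)\,d\zeta
$$
that appears in $Z_K(A)$ the natural symbol of $T$. The weight $(1+\phi'(|z|))^{-2}e^{-2\phi(|z|)}$ attached to the outer $dm(z)$ is then not ad hoc but precisely the factor furnished by Lemma~2.8: it encodes the gain produced by integration in these rapidly increasing weights, the Fock-space analogue of the decay $\tau(z)\sim(\Delta\phi(|z|))^{-1/2}$. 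Thus $Z_K(A)=\sup_{\eta\in\mathbb C}\big|(T\,\text{evaluated through the kernel})\big|$ is exactly the quantity that measures the size of $T$.

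The decisive step is the operator estimate $\|T\|_{F_\phi^2\to F_\phi^2}\le Z_K(A)$. The shape of $Z_K(A)$ — a supremum over $\eta$ of the modulus of an integral over $z$ — shows that this must be proved by retaining the $z$-integral intact (so that cancellation inside it is used), rather than by a crude triangle-inequality or Schur bound that would replace $\big|\int\cdots\big|$ by $\int|\cdots|$; here the Hermitian symmetry of $K$ is what lets the single $\eta$-supremum control the full operator norm. Once this is established, the hypothesis $Z_K(A)<1$ makes $I-T$ invertible via its Neumann series, giving $f=(I-T)^{-1}f_0\in F_\phi^2$; letting $(f(0),f'(0))$ range over $\mathbb C^2$ lets $f_0$ range over all affine polynomials, so every solution is covered. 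The main obstacle I expect is exactly this last estimate: justifying the exchange of integrations that produces the precise symbol above, verifying that Lemma~2.8 delivers the weight $(1+\phi'(|z|))^{-2}e^{-2\phi}$ with the correct constant, and carrying out the bound so that the answer is $Z_K(A)$ (modulus outside) and not a lossier quantity.
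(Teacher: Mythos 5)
Your proposal correctly identifies the scaffolding (solutions are entire, integrate the equation from the origin, insert the reproducing formula, absorb a small term), but it leaves exactly the decisive step unproven, and that gap is not cosmetic. You reduce the theorem to the operator bound $\|T\|_{F_\phi^2\to F_\phi^2}\le Z_K(A)$ and then declare this to be ``the main obstacle'' you expect, without giving any argument for it; but this bound \emph{is} the content of the theorem, since everything else (Neumann series, $f_0(z)=f(0)+f'(0)z\in F_\phi^2$, uniqueness of the ODE solution to identify $f$ with $(I-T)^{-1}f_0$ --- a point you also do not address) is routine. Moreover, your guess at the mechanism --- that one must keep the $z$-integral intact to exploit cancellation, and that Hermitian symmetry of $K$ lets a single $\eta$-supremum control the operator norm --- does not correspond to any argument in the paper or any standard device (Schur test, duality) that would convert a quantity of the form $\sup_{\eta}\left|\int_{\mathbb{C}}(\cdots)\,dm(z)\right|$ into an operator norm on $F_\phi^2$. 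So what you have is a plan whose core is missing.

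The paper's route is genuinely different and does not pass through an operator norm on $F_\phi^2$ at all. It integrates the equation once to represent the \emph{derivative}: $f'(z)=f'(0)-\int_0^z f(\zeta)A(\zeta)\,d\zeta$, inserts the reproducing formula and Fubini's theorem to obtain
\begin{equation*}
f'(z)=f'(0)-\int_{\mathbb{C}}f(\eta)e^{-2\phi(\eta)}\Big(\int_0^z\overline{K_\zeta(\eta)}A(\zeta)\,d\zeta\Big)dm(\eta),
\end{equation*}
and then invokes Lemma 2.8, the Littlewood--Paley type norm equivalence
\begin{equation*}
\|f\|^2_{F_\phi^2}\le C\Big(|f(0)|^2+\int_{\mathbb{C}}|f'(z)|\,\frac{e^{-2\phi(|z|)}}{(1+\phi'(|z|))^2}\,dm(z)\Big),
\end{equation*}
to pass from this pointwise representation of $f'$ back to the norm of $f$. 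This is where the weight $(1+\phi'(|z|))^{-2}e^{-2\phi(|z|)}$ in $Z_K(A)$ actually comes from: it is the weight of the norm equivalence applied to the \emph{outer} norm of $f$, not the symbol of an operator $T$ acting on $F_\phi^2$ as you frame it. The conclusion is then the absorption inequality $\|f\|^2_{F_\phi^2}\le P+Z_K(A)\|f\|^2_{F_\phi^2}$, giving $\|f\|^2_{F_\phi^2}\le P/(1-Z_K(A))$, with no inversion of $I-T$ anywhere. To rescue your write-up you would either have to actually prove your operator bound (which neither you nor the paper supplies) or restructure the argument around Lemma 2.8, which you mention only in passing and never use to close the estimate.
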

\section{Preliminary Lemmas }
  \begin{lem}\label{1} {\normalfont \cite{i.laine}} Suppose that $A_{j}(z)$ are entire functions, $j=0,\cdots,k$. Then all solutions of $(1)$ are entire functions.
  \end{lem}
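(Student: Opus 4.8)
The plan is to reduce the scalar equation $(1)$ to a first-order linear system and then produce an entire solution directly by the method of successive approximations, exploiting linearity to obtain convergence on all of $\mathbb{C}$. First I would set $Y=(f,f',\dots,f^{(k-1)})^{T}$ and rewrite $(1)$ in the companion form $Y'=M(z)Y+B(z)$, where $M(z)$ is the $k\times k$ companion matrix whose last row is $(-A_0,-A_1,\dots,-A_{k-1})$ and $B(z)=(0,\dots,0,A_k(z))^{T}$; this is exactly the content of solving $f^{(k)}=A_k-\sum_{j=0}^{k-1}A_jf^{(j)}$. Since each $A_j$ is entire, both $M$ and $B$ have entire entries, and the first component of any solution $Y$ of the system is a solution of $(1)$, and conversely.

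Next I would fix an arbitrary base point $z_0\in\mathbb{C}$ and a prescribed initial vector $Y(z_0)=v$. Because $\mathbb{C}$ is simply connected and $M,B$ are holomorphic, the line integrals below are path-independent, and I would define the Picard iterates by integrating along the straight segment from $z_0$ to $z$:
$$
Y_0(z)\equiv v,\qquad Y_{n+1}(z)=v+\int_{z_0}^{z}\bigl(M(\zeta)Y_n(\zeta)+B(\zeta)\bigr)\,d\zeta .
$$
Each $Y_n$ is entire. On any closed disk $\overline{D(z_0,R)}$ the entire matrix $M$ is bounded, say $\|M\|\le m_R$, and an induction gives
$$
\sup_{|z-z_0|\le R}\|Y_{n+1}(z)-Y_n(z)\|\le C_R\,\frac{(m_R R)^{n}}{n!},
$$
where $C_R$ bounds $\|Y_1-Y_0\|$ and is independent of $n$.

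The key point, and the only place where the linear structure really matters, is that the absence of nonlinear feedback lets the iterated integrals produce the factorial denominator $n!$, so the telescoping series $\sum_n (Y_{n+1}-Y_n)$ converges uniformly on $\overline{D(z_0,R)}$ for every $R$, however large. Hence the limit $Y=\lim_n Y_n$ is holomorphic on each disk, and since $R$ is arbitrary it is entire; passing to the limit in the integral recursion shows that $Y$ solves $Y'=MY+B$ on all of $\mathbb{C}$. Its first component is therefore an entire solution of $(1)$ realizing the prescribed initial data at $z_0$.

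Finally I would invoke uniqueness of the initial value problem to conclude that \emph{every} solution is entire: if $f$ is any solution on a domain $\Omega\ni z_0$, I would form its initial vector $v=Y(z_0)$ and apply the construction above; the local uniqueness theorem for the linear system (itself a consequence of the same Gronwall-type estimate applied to the difference of two solutions) forces $f$ to coincide on $\Omega$ with the first component of the globally constructed $Y$, so $f$ extends to that entire function. The main obstacle is purely the convergence estimate: one must verify that, thanks to linearity, the successive approximations converge on disks of \emph{arbitrary} radius rather than only on a small disk whose size is dictated by the coefficients. This is precisely what rules out movable singularities and yields entireness, and it is the step I would expect to require the most care to state cleanly.
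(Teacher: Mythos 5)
Your proof is correct, and in fact the paper offers no proof of this lemma at all: it is stated with a citation to Laine's monograph, whose argument is precisely the classical successive-approximations proof you give (reduction to the companion system $Y'=M(z)Y+B(z)$, Picard iterates integrated along segments with the factorial estimate $C_R(m_RR)^n/n!$ yielding uniform convergence on disks of arbitrary radius, and a Gronwall-type uniqueness step), so you have reconstructed essentially the standard proof behind the citation. One small point to state explicitly in a clean write-up: local uniqueness at $z_0$ only gives coincidence of $f$ with the global solution near $z_0$, and you should then invoke the identity theorem on the connected domain $\Omega$ to conclude agreement on all of $\Omega$; your sketch uses this implicitly but does not say it.
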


  \begin{lem}\label{1} {\normalfont \cite{zj}} Suppose that  $0<p<\infty$  and  $ m$ is a positive integer. Then for any entire function $f(z)$,
   $$C^{-1} \|f\|_{p}\leq\sum_{\alpha\leq m-1}|f^{(\alpha)}(0)|+\bigg(\int_{\mathbb{C}}|f^{(m)}(z)\frac{1}{(1+|z|)^{m}}e^{-\frac{1}{2}|z|^{2}}|^{p}dm(z)\bigg)^{\frac{1}{p}}\leq C \|f\|_{p},$$ where $C$ only depends on $p,m$.
  \end{lem}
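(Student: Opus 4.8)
The plan is to prove the two-sided estimate as two separate inequalities, both resting on two elementary facts about an entire function $f$ relative to the Gaussian weight $e^{-|z|^2/2}$. First, since $|f|^{p}$ is subharmonic for every $p>0$ and the weight changes by only a bounded factor on the disc $D(z,\rho(z))$ of radius $\rho(z)=(1+|z|)^{-1}$ (because $\big||w|^{2}-|z|^{2}\big|\leq 3$ there), the sub-mean value property yields the pointwise estimate
\begin{equation}
|f(z)|^{p}e^{-\frac{p}{2}|z|^{2}}\leq C(1+|z|)^{2}\int_{D(z,\rho(z))}|f(w)|^{p}e^{-\frac{p}{2}|w|^{2}}\,dm(w).\tag{$\star$}
\end{equation}
Second, Cauchy's integral formula on the same disc gives $|f^{(m)}(z)|\leq m!\,(1+|z|)^{m}\max_{|w-z|=\rho(z)}|f(w)|$.

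For the right-hand inequality I would argue as follows. Applying $(\star)$ on the fixed unit disc shows $\sup_{|z|\leq1}|f(z)|\lesssim\|f\|_{p}$, and Cauchy's estimate on $|z|=1$ then bounds each $|f^{(\alpha)}(0)|$, $\alpha\leq m-1$, by $C\|f\|_{p}$. For the derivative term, combine the two facts: the Cauchy estimate converts $|f^{(m)}(z)|/(1+|z|)^{m}$ into $\max_{|w-z|=\rho(z)}|f(w)|$, after which $(\star)$, applied at the point realising the maximum (where $1+|w|\asymp1+|z|$), bounds the weighted integrand by $C(1+|z|)^{2}\int_{D(z,3\rho(z))}|f|^{p}e^{-\frac{p}{2}|w|^{2}}\,dm$. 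Integrating in $z$ and using that the dilated discs $\{D(z,3\rho(z))\}$ have bounded overlap, a Fubini/Schur computation in which the factor $(1+|z|)^{2}$ cancels the area $\asymp(1+|z|)^{-2}$, gives $\|f^{(m)}/(1+|\cdot|)^{m}\|_{p}\leq C\|f\|_{p}$.

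The left-hand inequality is where the real work lies. I would reconstruct $f$ from its data by Taylor's formula with integral remainder, $f=P_{m-1}+R_{m}$, where $P_{m-1}(z)=\sum_{\alpha=0}^{m-1}\frac{f^{(\alpha)}(0)}{\alpha!}z^{\alpha}$ and $R_{m}(z)=\frac{1}{(m-1)!}\int_{0}^{z}(z-t)^{m-1}f^{(m)}(t)\,dt$ (path-independent since $f$ is entire). Since each monomial has finite Fock norm, $\|P_{m-1}\|_{p}\leq C\sum_{\alpha\leq m-1}|f^{(\alpha)}(0)|$ is immediate, so everything reduces to the single operator estimate
\begin{equation}
\|R_{m}\|_{p}\leq C\left\|\frac{f^{(m)}}{(1+|\cdot|)^{m}}\right\|_{p}.\tag{$\dagger$}
\end{equation}

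Establishing $(\dagger)$ is the main obstacle, and the naive bounds fail: writing $R_{m}$ as an average of the dilates $f^{(m)}(sz)$, $0\leq s\leq1$, and applying Minkowski's inequality produces an $s$-integral that diverges at the diagonal $s=1$, while reducing to a one-dimensional weighted Hardy inequality along rays breaks down near the origin (and for small $p$), since both reductions discard the genuinely two-dimensional averaging that the Gaussian weight supplies. I would therefore treat $(\dagger)$ by a Schur-type estimate drawn directly from $(\star)$: one bounds the kernel of $R_{m}$ against a Gaussian-adapted test weight, exploiting that the super-exponential decay of $e^{-|z|^{2}/2}$ dominates the polynomial growth of the Riemann–Liouville kernel $(z-t)^{m-1}$, so that the off-diagonal contributions are summable after the same bounded-overlap counting used above. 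For $1<p<\infty$ there is a cleaner shortcut via $(F^{p})^{*}\cong F^{p'}$ under the Fock pairing, whereby $\|R_{m}\|_{p}\asymp\sup_{\|g\|_{p'}\leq1}|\langle R_{m},g\rangle|$ and the $m$-fold integration is transferred onto $g$ through its adjoint, reducing matters to a dual weighted estimate that again follows from the Cauchy/sub-mean machinery applied to $g$ with exponent $p'$. Since the lemma must hold for the full range $0<p<\infty$, where duality is unavailable, the decisive step is making the direct Schur estimate uniform down to $p<1$; this is the delicate point and is where I would expect to spend most of the effort.
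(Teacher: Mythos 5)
First, a point of comparison: the paper contains no proof of this lemma at all --- it is quoted verbatim from the cited reference \cite{zj} (Z.~J.~Hu, \emph{Proc. Amer. Math. Soc.} 141 (2013)) --- so your attempt can only be measured against that source. Your right-hand inequality is correct and is essentially the standard argument: subharmonicity of $|f|^{p}$ on the discs $D(z,(1+|z|)^{-1})$, on which $e^{-\frac{p}{2}|w|^{2}}\asymp e^{-\frac{p}{2}|z|^{2}}$, combined with the Cauchy estimate at radius $(1+|z|)^{-1}$ (which is precisely what generates the weight $(1+|z|)^{-m}$) and a Fubini/bounded-overlap count. That half stands.

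The genuine gap is that the left-hand inequality --- the actual content of the lemma --- is never proved. The reduction to $(\dagger)$ via $f=P_{m-1}+R_{m}$ is the right first move, but everything after it is a statement of intent rather than an argument: ``a Schur-type estimate drawn directly from $(\star)$'' comes with no kernel bound, no test weight, and no verification, and you yourself concede that the range $0<p<1$ is left open. Moreover, the plan as described is structurally unsuited to $R_{m}$: its ``kernel'' is not a positive kernel against area measure but a measure carried by the segment $[0,z]$, so a Schur test (which in any case requires $p>1$) does not apply as stated, and the bounded-overlap counting from the easy direction is purely local (comparable discs) and says nothing about the global ray geometry of $R_{m}$; your duality shortcut for $1<p<\infty$ is likewise unexecuted and cannot reach $p\leq 1$, which the lemma requires. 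The mechanism that actually closes estimates of this type in the cited source and in the related Cho--Zhu arguments is a discretization/telescoping along rays, after reducing to $m=1$ via a weighted variant of the statement: one steps inward from $z$ by increments of length $\asymp(1+|t|)^{-1}$, each step costing a local term $\asymp\sup_{D}|f'|\,(1+|t|)^{-1}$ (controlled through the analogue of $(\star)$ applied to $f'$) while the Gaussian weight gains a fixed factor $e^{-c}$ per step, since $e^{-\frac{1}{2}r^{2}+\frac{1}{2}(r-(1+r)^{-1})^{2}}\leq e^{-c}$ for $r\geq 1$; the resulting geometrically decaying series is then summable in $p$-th power uniformly for all $0<p<\infty$, using $(\sum_{k}a_{k})^{p}\leq\sum_{k}a_{k}^{p}$ when $p\leq1$ and H\"older when $p>1$. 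Without an argument of this kind, your proposal establishes only the easy half of the equivalence.
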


  \begin{lem}\label{1} {\normalfont \cite{r}} Suppose that $f(z)$ is a solution of  $(1)$ in the disk $\Delta_{R}=\{z\in\mathbb{C}: |z|<R\}$, where $0<R\leq\infty$. Let $0\leq k_{c}\leq k$ be the number of nonzero coefficients $A_{j}(z)$ $(j=0,\cdots, k-1)$. Let $R_{0}$ be a positive real number  such that there exists some $A_{j}(R_{0}e^{i\theta})\neq 0$. Then for all $R_{0}<r<R$, there exists
 $$ |f(re^{i\theta})|\leq C \left( \max_{0\leq x\leq r}|A_{k}(x e^{i\theta})|+1\right)\exp\int_{0}^{r}\left(\delta+k_{c}\max_{0\leq j\leq k-1}|A_{j}(se^{i\theta})|^{\frac{1}{k-j}}\right)ds,
 $$where $C$ is some positive constant depending on the values of the derivatives of $f$ and the values of $A_{j}$ at $R_{0}e^{i\theta}$, $j=0,\cdots,k$. Here
 \begin{numcases}{\delta=}
0, & if $A_{k} \equiv 0,$\nonumber\\
1, &  \textrm{otherwise}\nonumber.
\end{numcases}
\end{lem}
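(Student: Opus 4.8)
The plan is to reduce the estimate along a fixed ray to a scalar linear ODE in a real variable, dominate that equation by Herold's comparison equation with nonnegative coefficients, and then estimate the resulting real majorant by a diagonal rescaling of its companion system that exposes the rate $\max_j|A_j|^{1/(k-j)}$.

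First I would restrict to the ray. Fix $\theta$, put $z=se^{i\theta}$ and $g(s)=f(se^{i\theta})$; since $g^{(j)}(s)=e^{ij\theta}f^{(j)}(se^{i\theta})$, substituting into $(1)$ and multiplying by $e^{ik\theta}$ shows that $g$ solves a $k$th order linear ODE in the real variable $s$,
$$
g^{(k)}+\sum_{j=0}^{k-1}B_{j}(s)g^{(j)}=B_{k}(s),\qquad B_{j}(s)=e^{i(k-j)\theta}A_{j}(se^{i\theta}),
$$
so that $|B_{j}(s)|=|A_{j}(se^{i\theta})|$ for $0\le j\le k$. By Lemma~2.1, $f$ and hence $g$ are analytic on the relevant range, so it suffices to bound $|g(r)|$.

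Next I would introduce the real majorant equation obtained by replacing every coefficient by its modulus and making every sign positive,
$$
v^{(k)}=\sum_{j=0}^{k-1}|B_{j}(s)|\,v^{(j)}+|B_{k}(s)|,
$$
with initial data at $s=R_{0}$ chosen so that $v^{(j)}(R_{0})\ge|g^{(j)}(R_{0})|=|f^{(j)}(R_{0}e^{i\theta})|$ for $0\le j\le k-1$. Herold's comparison theorem (the technique of \cite{r}) then gives $|g^{(j)}(s)|\le v^{(j)}(s)$ for all $s\in[R_{0},r]$ and all $j$, so in particular $|f(re^{i\theta})|\le v(r)$; the chosen data, and therefore the eventual constant $C$, depend only on the $f^{(j)}(R_{0}e^{i\theta})$ and the $A_{j}(R_{0}e^{i\theta})$. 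Because all $|B_{j}|\ge0$ and the initial data are nonnegative, a bootstrapping argument shows that $v,v',\dots,v^{(k-1)}$ stay nonnegative and nondecreasing on $[R_{0},r]$; this monotonicity is the tool that makes the final estimate tractable.

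Finally I would estimate $v(r)$ through the first-order companion system $V'=M(s)V+e(s)$ with $V=(v,v',\dots,v^{(k-1)})^{T}$, where $M(s)$ carries $1$'s on the superdiagonal and last row $(|B_{0}|,\dots,|B_{k-1}|)$, and $e(s)=(0,\dots,0,|B_{k}|)^{T}$. Writing $h(s)=\max_{0\le j\le k-1}|A_{j}(se^{i\theta})|^{1/(k-j)}$ and conjugating by $D(s)=\operatorname{diag}(1,h(s),\dots,h(s)^{k-1})$ (regularized to $1+h$ so it stays positive), the bound $|A_{j}|\le h^{k-j}$ forces every nonzero entry of $D^{-1}MD$ to be at most a constant multiple of $h(s)$; summing over the $k_{c}$ nonzero terms produces exactly the rate $k_{c}\max_{j}|A_{j}|^{1/(k-j)}$, while the forcing $e(s)$ yields the factor $\max_{0\le x\le r}|A_{k}(xe^{i\theta})|+1$ and $\delta$ records the homogeneous/nonhomogeneous dichotomy. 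A Gronwall estimate applied to $\sum_{j}|(D^{-1}V)_{j}|$ then gives the asserted bound after enlarging the lower limit of integration from $R_{0}$ to $0$, which only increases the right-hand side. The main obstacle is precisely this last step: the $s$-dependence of $D(s)$ injects a correction of size $|h'|/h$ into the exponent that does not appear in the target estimate, and showing that it is absorbed into the constant $C$ rather than the $r$-dependent exponential is where the monotonicity of the $v^{(j)}$ and the slow variation of $h$ (available since the $A_{j}$ are entire and, in the applications, $\phi\in\mathcal{I}$) must be used carefully; the reduction and the comparison step are routine by contrast.
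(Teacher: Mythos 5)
The paper contains no proof of this lemma: it is imported verbatim from \cite{r}, so your attempt has to be measured against the argument there. Your first two steps do coincide with it. The restriction $g(s)=f(se^{i\theta})$ to a ray, giving a $k$th order equation in the real variable $s$ with $|B_{j}(s)|=|A_{j}(se^{i\theta})|$, and the passage to the nonnegative majorant $v^{(k)}=\sum_{j=0}^{k-1}|B_{j}|v^{(j)}+|B_{k}|$ with dominating initial data at $R_{0}$ via Herold's comparison theorem is exactly the technique the present paper's introduction attributes to \cite{r}; your bootstrap showing $v,v',\dots,v^{(k-1)}$ nonnegative and nondecreasing is correct, as is the harmless enlargement of $\int_{R_{0}}^{r}$ to $\int_{0}^{r}$ and the bookkeeping of what $C$ depends on.

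The gap you flag in the third step, however, is fatal to the method as proposed, not a technicality to be absorbed. Conjugating by $D(s)=\operatorname{diag}(1,1+h,\dots,(1+h)^{k-1})$ puts the diagonal term $-jh'/(1+h)$ into the system, so your Gronwall exponent acquires, besides $O(\int_{0}^{r}h\,ds)$, the quantity $\int_{0}^{r}|h'|/(1+h)\,ds$ taken over the set where $h$ decreases, i.e.\ essentially the total downward variation of $\log(1+h)$. This is \emph{not} $O\big(1+\int_{0}^{r}h\,ds\big)$ for general entire coefficients: take $k=2$, $A_{1}\equiv 0$, $A_{2}\equiv 0$, $A_{0}(z)=\sin(e^{z})$ and $\theta=0$; then $h(s)=|\sin(e^{s})|^{1/2}$ performs about $e^{r}/\pi$ oscillations between $0$ and $1$ on $[0,r]$, each costing a fixed decrease of $\log(1+h)$, so the correction term is of order $e^{r}$ while $\int_{0}^{r}h\,ds\leq r$; your method then yields $\exp(ce^{r})$ where the lemma asserts $\exp(cr)$. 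Your proposed rescue via ``slow variation of $h$'' is unavailable: the lemma is a general statement about analytic coefficients in $\Delta_{R}$ (and even the hypothesis $|A_{j}(re^{i\theta})|\leq\phi^{1/2}(r)/r$ in Theorem 1.3 controls only the modulus, never the oscillation, of $A_{j}$ along a ray), and the monotonicity of the $v^{(j)}$ has no bearing on $h'$. The proofs in the literature (see \cite{r}, building on \cite{e}) avoid differentiating $h$ altogether: the majorant solution is controlled through iterated Volterra integrals (Picard successive approximations), in which the pointwise bounds $|B_{j}|\leq h^{k-j}$ enter only under integrals over simplices and resum to $\exp\big(k_{c}\int_{0}^{r}h(s)\,ds\big)$, the forcing layer producing the factor $\max_{0\leq x\leq r}|A_{k}(xe^{i\theta})|+1$ and the dichotomy $\delta$. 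A secondary defect: even ignoring the $h'$ issue, running Gronwall on $\sum_{j}|(D^{-1}V)_{j}|$ bounds the growth rate by the worst \emph{column} sum of $D^{-1}MD$, which is $2h$ for the middle columns, so you would get the constant $2$ rather than the stated sharp $k_{c}$ when $k_{c}=1$; to recover exactly $k_{c}$ one must work in the sup-norm, where the worst \emph{row} sum is $k_{c}h$.
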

 \begin{lem}\label{1} {\normalfont \cite{GG}} Suppose that  $f$ is a transcendental meromorphic function and $\alpha$ is a positive constant. Then there exist a set $W\subset [0,2\pi]$ of linear measure zero, a constant $C>0$ of depending only on $\alpha$ and a constant $r_{0}=r_{0}(\theta)>1$
 such that
 $$\bigg|\frac{f^{(m)}(re^{i\theta})}{f(re^{i\theta})}\bigg|\leq C\bigg(T(\alpha r,f)\frac{\log^{\alpha}r}{r}\log T(\alpha r,f)\bigg)^{m},\quad m\in\mathbb{N},
 $$ where $r>r_{0}$ and $\theta\in[0,2\pi)\backslash W $.
 \end{lem}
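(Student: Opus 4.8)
The plan is to deduce this pointwise estimate from the differentiated Poisson--Jensen formula, establishing first the logarithmic-derivative case $m=1$ and then bootstrapping to arbitrary order $m$. Fix $z=re^{i\theta}$ and apply the Poisson--Jensen formula on the disk $\{|w|<R\}$ with $R=\alpha r$. Differentiating the local holomorphic branch of $\log f$ once yields
$$\frac{f'(z)}{f(z)}=\frac{1}{2\pi}\int_0^{2\pi}\log|f(Re^{i\varphi})|\,\frac{2Re^{i\varphi}}{(Re^{i\varphi}-z)^2}\,d\varphi+\sum_{|a_\mu|<R}\left(\frac{1}{z-a_\mu}+\frac{\overline{a_\mu}}{R^2-\overline{a_\mu}z}\right)-\sum_{|b_\nu|<R}\left(\frac{1}{z-b_\nu}+\frac{\overline{b_\nu}}{R^2-\overline{b_\nu}z}\right),$$
where $a_\mu$ and $b_\nu$ run over the zeros and poles of $f$ in $\{|w|<R\}$, counted with multiplicity. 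I would estimate the three pieces separately.

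The boundary integral is the easy part: since $|Re^{i\varphi}-z|\geq R-r=(\alpha-1)r$ and $\frac{1}{2\pi}\int_0^{2\pi}\big|\log|f(Re^{i\varphi})|\big|\,d\varphi\leq m(R,f)+m(R,1/f)\leq 2T(R,f)+O(1)$, this term is $O\!\big(\tfrac{1}{r}T(\alpha r,f)\big)$. The ``reflected'' sums $\sum\frac{\overline{a_\mu}}{R^2-\overline{a_\mu}z}$ and their pole analogue are bounded term by term using $|R^2-\overline{a_\mu}z|\geq R(R-r)$ together with the counting estimate $n(R,f)+n(R,1/f)=O(T(\alpha' r,f))$ from Jensen's formula; these again contribute $O\!\big(\tfrac{1}{r}T(\alpha r,f)\big)$.

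The main obstacle is the remaining sum $\sum\frac{1}{|z-a_\mu|}+\sum\frac{1}{|z-b_\nu|}$ over the zeros and poles themselves, since individual terms blow up when $re^{i\theta}$ is close to a zero or pole. Here I would run a measure-theoretic argument. For a fixed $r$ and any point $a$, the angular measure of $\{\theta:\,|re^{i\theta}-a|<\delta\}$ is $O(\delta/r)$, so the set of $\theta$ for which $re^{i\theta}$ comes within distance $\delta$ of some zero or pole has measure $O(\delta N/r)$, where $N=O(T(\alpha r,f))$ bounds the number of zeros and poles in $\{|w|<R\}$. Letting $\delta=\delta(r)$ shrink suitably and summing over a sequence $r_n\to\infty$ in a Borel--Cantelli / union-over-scales construction produces a single exceptional set $W\subset[0,2\pi]$ of linear measure zero such that, for each $\theta\notin W$, there is a threshold $r_0=r_0(\theta)$ beyond which every zero and pole stays at distance at least a constant multiple of $\delta(r)$ from $re^{i\theta}$; this gives $\sum\frac{1}{|z-a_\mu|}+\sum\frac{1}{|z-b_\nu|}=O\!\big(N/\delta(r)\big)$. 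Balancing the choice of $\delta(r)$ against the measure budget is precisely what forces the factors $\log^\alpha r$ and $\log T(\alpha r,f)$, so that this term is $O\!\big(\tfrac{1}{r}T(\alpha r,f)\log^\alpha r\,\log T(\alpha r,f)\big)$. Combining the three bounds establishes the case $m=1$.

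Finally I would bootstrap to general $m$ through the telescoping identity
$$\frac{f^{(m)}(z)}{f(z)}=\prod_{j=0}^{m-1}\frac{\big(f^{(j)}\big)'(z)}{f^{(j)}(z)},$$
applying the first-order estimate to each factor $g=f^{(j)}$ and replacing $T(\alpha r,f^{(j)})$ by $O\!\big(T(\alpha r,f)\big)$ via the standard comparison $T(r,f^{(j)})\leq C_jT(r,f)+S(r,f)$, valid for transcendental meromorphic $f$. After enlarging $W$ to the countable (hence still null) union of the exceptional sets attached to all $f^{(j)}$ and taking $r_0(\theta)$ to be the maximum of the finitely many thresholds, each of the $m$ factors is bounded by a constant multiple of $T(\alpha r,f)\tfrac{\log^\alpha r}{r}\log T(\alpha r,f)$, whence the product is bounded by the $m$-th power, which is exactly the claimed inequality with $C$ depending only on $\alpha$. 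The delicate point throughout is the uniform-in-$r$ bookkeeping in the covering argument, which must simultaneously deliver a $\theta$-independent null set $W$ and the precise logarithmic factors; everything else reduces to routine Nevanlinna-theoretic estimates.
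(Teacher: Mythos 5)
The paper itself contains no proof of this lemma: it is quoted (as a known result) from Gundersen's 1988 paper \cite{GG}, so the only meaningful comparison is with the original source --- and your reconstruction is essentially Gundersen's own argument. The differentiated Poisson--Jensen formula on $\{|w|<R\}$, the $O\!\big(T(\alpha r,f)/r\big)$ bounds for the boundary integral and the reflected sums, the isolation of $\sum 1/|z-c_n|$ over zeros and poles as the only problematic term, the construction of the angular null set $W$ from intervals of measure $O(\delta/r)$ around $\arg c_n$ budgeted against the counting bound $n(R)=O(T(\alpha' r,f))$ over a union of scales (which is precisely where the factors $\log^\alpha r$ and $\log T(\alpha r,f)$ are purchased), and the passage to general $m$ via the telescoping product $f^{(m)}/f=\prod_{j=0}^{m-1}(f^{(j)})'/f^{(j)}$ with a countable union of exceptional sets --- all of this matches the structure of Gundersen's proof of his Theorem 3. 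Your acknowledgement that the uniform-in-$r$ bookkeeping (dyadic scales, so the bound holds for all $r>r_0(\theta)$ and not merely along a sequence $r_n$) is the technical heart is accurate; as a sketch it is honest rather than gappy on that point.

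Two corrections are needed, though. First, in the bootstrap you invoke $T(r,f^{(j)})\leq C_jT(r,f)+S(r,f)$; as stated this does not suffice, because $S(r,f)=o(T(r,f))$ is guaranteed only outside a set of radii of finite (logarithmic) measure, whereas the conclusion must hold for \emph{every} $r>r_0(\theta)$ along each ray outside $W$ --- an uncontrolled radial exceptional set inherited from $S(r,f)$ would break the statement. The standard repair, and the one used in \cite{GG}, is the exceptional-set-free estimate $m(r,f'/f)\leq C\big(\log^{+}T(\beta r,f)+\log r\big)$ valid for all large $r$ when $\beta>1$, which gives $T(r,f^{(j)})\leq C_jT(\beta^{j}r,f)$ for all large $r$ with no exceptional set; one then runs the first-order estimate with intermediate dilation parameters whose composition stays below $\alpha$ and enlarges at the end by monotonicity. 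Second, your argument implicitly requires $\alpha>1$ (you divide by $R-r=(\alpha-1)r$, and $z$ must lie strictly inside the Poisson--Jensen disk $|w|<\alpha r$); that is Gundersen's actual hypothesis, and the paper's restatement ``$\alpha$ is a positive constant'' is a misquote which no proof along these lines can honor for $0<\alpha\leq1$. Relatedly, the constant you produce necessarily depends on $m$ as well as $\alpha$ (the per-factor comparison constants $C_j$ grow with $j$), which agrees with Gundersen's formulation rather than the paper's; this is harmless for the paper's application, where $m\leq k$ is fixed.
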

 \begin{lem}\label{1}  Suppose that $\phi$ is in the class $\mathcal{I}$ and $f(z)$ is  an entire function.  Then for any $R>0$, there is a constant $C>0$ such that
 $$ \int_{\mathbb{C}}|f(z)|^{p}e^{-p\phi(z)}\phi^{q}(z) dm(z)\leq C\int_{|z|\geq R}|f(z)|^{p}e^{-p\phi(z)}\phi^{q}(z) dm(z).
 $$
 \end{lem}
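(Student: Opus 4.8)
The plan is to reduce the claimed inequality to a comparison between the integral over the disk $\{|z|<R\}$ and the integral over a fixed exterior annulus, and then to exploit the monotonicity of the integral means of $|f|^p$.

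First I would split $\int_{\mathbb{C}}=\int_{|z|<R}+\int_{|z|\geq R}$, so that the assertion becomes equivalent to bounding the interior integral by a constant multiple of the exterior one. Since $\phi>0$ is twice continuously differentiable, the weight $w(z)=e^{-p\phi(z)}\phi^{q}(z)$ is continuous and strictly positive, hence bounded above and below by positive constants on each compact set; in particular on the closed disk $\{|z|\leq R\}$ and on the annulus $\{R\leq|z|\leq 2R\}$. Thus, up to multiplicative constants depending only on $R,p,q,\phi$, I may replace $w\,dm$ by Lebesgue measure $dm$ on both regions, reducing the problem to the unweighted estimate $\int_{|z|<R}|f|^{p}\,dm\leq C'\int_{R\leq|z|\leq 2R}|f|^{p}\,dm$ with $C'$ independent of $f$.

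The core step is the monotonicity of the circular means. Since $f$ is entire, $|f|^{p}$ is subharmonic for every $p\in(0,\infty)$ (as $e^{pt}$ is convex increasing and $\log|f|$ is subharmonic), so the mean $m_p(r):=\frac{1}{2\pi}\int_{0}^{2\pi}|f(re^{i\theta})|^{p}\,d\theta$ is a non-decreasing function of $r$. Passing to polar coordinates gives $\int_{|z|<R}|f|^{p}\,dm=2\pi\int_{0}^{R}m_p(r)\,r\,dr\leq \pi R^{2}m_p(R)$, using $m_p(r)\leq m_p(R)$ for $r\leq R$, while $\int_{R\leq|z|\leq 2R}|f|^{p}\,dm=2\pi\int_{R}^{2R}m_p(r)\,r\,dr\geq 3\pi R^{2}m_p(R)$, using $m_p(r)\geq m_p(R)$ for $r\geq R$. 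Hence the interior integral is at most $\tfrac13$ of the annular one, which we may take with $C'=\tfrac13$. Combining this with the two weight comparisons and adding back the exterior integral $\int_{|z|\geq R}|f|^{p}w\,dm$ yields the inequality with an explicit constant $C=1+\tfrac{1}{3}\,w_{\max}^{\mathrm{in}}/w_{\min}^{\mathrm{out}}$, where $w_{\max}^{\mathrm{in}}=\sup_{|z|\leq R}w$ and $w_{\min}^{\mathrm{out}}=\inf_{R\leq|z|\leq 2R}w$.

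The only genuine ingredient is the monotonicity of $m_p(r)$, which I would simply invoke as a standard consequence of the subharmonicity of $|f|^{p}$; everything else is a comparison of the weight with Lebesgue measure on compact annuli. I would also remark that no integrability hypothesis on $f$ is needed: if the right-hand side is infinite the inequality is trivial, while if it is finite the chain of estimates automatically forces the interior (and hence the full) integral to be finite as well.
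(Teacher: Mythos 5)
Your proof is correct and follows essentially the same route as the paper's: split off the disk $\{|z|<R\}$, use the subharmonicity of $|f|^p$ to compare the interior integral with an integral over a fixed exterior annulus (the paper uses $R\le|z|\le R+1$, you use $R\le|z|\le 2R$), and absorb the weight via its positive upper and lower bounds on compact sets. Your write-up is in fact more explicit than the paper's — the monotonicity of the circular means is invoked by name and the resulting constant $C=1+\tfrac{1}{3}\,w_{\max}^{\mathrm{in}}/w_{\min}^{\mathrm{out}}$ is visibly independent of $f$, a point the paper's case analysis leaves vague.
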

 \begin{proof} $$\int_{R\leq|z|\leq R+1}|f(z)|^{p}e^{-p\phi(z)}\phi^{q}(z) dm(z) =\int_{R}^{R+1}\int_{0}^{2\pi}|f(re^{i\theta})|^{p}e^{-p\phi(r)}\phi^{q}(r)rdrd\theta
$$$$\geq\min_{R\leq r\leq R+1}e^{-p\phi(r)}\phi^{q}(r)r\int_{R}^{R+1}\int_{0}^{2\pi}|f^{p}(re^{i\theta})|d\theta dr.$$Since $f^{p}(z)$ is an entire function, $|f^{p}(z)|$ is a subharmonic function. If $$\int_{0}^{2\pi}|f^{p}(re^{i\theta})|d\theta =0,$$ then $f(z)\equiv0$ and the inequality holds.  If $\int_{0}^{2\pi}|f^{p}(re^{i\theta})|d\theta >0$, then there exists $C_{1}>0$ such that
$$\int_{|z|\leq R}|f(z)|^{p}e^{-p\phi(z)}\phi^{q}(z) dm(z)\leq C_{1}\int_{R\leq|z|\leq R+1}|f(z)|^{p}e^{-p\phi(z)}\phi^{q}(z) dm(z).
$$ Therefore, $$ \int_{\mathbb{C}}|f(z)|^{p}e^{-p\phi(z)}\phi^{q}(z) dm(z)\leq C\int_{|z|\geq R}|f(z)|^{p}e^{-p\phi(z)}\phi^{q}(z) dm(z),
 $$ where $C=C_{1}+1$.
 \end{proof}
 \begin{lem}\label{1}  Suppose that $\phi$ is in the class $\mathcal{I}$. Then $$\lim_{r\rightarrow\infty}\frac{\phi(r)}{r^{2}}=\infty.$$
 \end{lem}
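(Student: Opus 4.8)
The plan is to reduce the statement to two successive integrations, using only the fact that membership in $\mathcal{I}$ forces the Laplacian $\Delta\phi$ to blow up. First I would record that, for a radial weight, the Laplacian is $\Delta\phi(r)=\phi''(r)+\phi'(r)/r$. The defining comparison $C^{-1}(\Delta\phi(|z|))^{-1/2}\le\tau(z)\le C(\Delta\phi(|z|))^{-1/2}$ together with $\tau(z)\to 0$ as $|z|\to\infty$ gives $(\Delta\phi(r))^{-1/2}\le C\tau(r)\to 0$, whence $\Delta\phi(r)\to\infty$. This is the only structural input from the class $\mathcal{I}$ that I expect to need.

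Next I would integrate. Writing the radial Laplacian as $\Delta\phi(r)=\frac1r\,(r\phi'(r))'$, one has $(r\phi'(r))'=r\,\Delta\phi(r)$. Fix $M>0$ and choose $R$ so large that $\Delta\phi(s)\ge M$ for $s\ge R$; integrating from $R$ to $r$ yields
$$
r\phi'(r)\ge R\phi'(R)+\int_R^r s\,\Delta\phi(s)\,ds\ge \tfrac{M}{2}\,(r^2-R^2)+R\phi'(R).
$$
Dividing by $r$ shows $\liminf_{r\to\infty}\phi'(r)/r\ge M/2$, and since $M$ is arbitrary, $\phi'(r)/r\to\infty$. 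A second integration then finishes the argument: given $N>0$, pick $R'$ with $\phi'(s)\ge Ns$ for $s\ge R'$, so that
$$
\phi(r)\ge \phi(R')+\int_{R'}^r\phi'(s)\,ds\ge \tfrac{N}{2}\,(r^2-R'^2)+\phi(R').
$$
Dividing by $r^2$ gives $\liminf_{r\to\infty}\phi(r)/r^2\ge N/2$, and letting $N\to\infty$ proves the claim.

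I do not anticipate a genuine obstacle here: the argument is elementary calculus once $\Delta\phi\to\infty$ is in hand, and all boundary terms ($R\phi'(R)$ and $\phi(R')$) are finite because $\phi$ is twice continuously differentiable. The only point requiring a little care is the direction of the comparison inequalities—one must use the upper bound on $(\Delta\phi)^{-1/2}$ (equivalently the lower bound $\tau\ge C^{-1}(\Delta\phi)^{-1/2}$) to extract $\Delta\phi\to\infty$ from $\tau\to 0$—but this is immediate. As a sanity check, the model weights $\phi(r)=r^{\alpha}$ with $\alpha>2$ satisfy $\Delta\phi(r)\sim\alpha^{2}r^{\alpha-2}\to\infty$ and indeed $\phi(r)/r^{2}=r^{\alpha-2}\to\infty$, consistent with the conclusion.
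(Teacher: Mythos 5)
Your proof is correct, and it rests on exactly the same structural input as the paper's: from $\tau(r)\to 0$ and the bound $C^{-1}(\Delta\phi(r))^{-1/2}\le\tau(r)$ (valid for $r\ge 1$, which is all you need) one extracts $\Delta\phi(r)\to\infty$, and the radial identity $\Delta\phi(r)=\tfrac{1}{r}\,(r\phi'(r))'$ is the bridge from the Laplacian to $\phi$ itself. Where you differ is in how that bridge is crossed. The paper applies L'Hospital's rule twice, writing the chain $\lim_{r\to\infty}\phi(r)/r^{2}=\lim_{r\to\infty}\phi'(r)/(2r)=\lim_{r\to\infty}(r\phi'(r))'/(4r)=\lim_{r\to\infty}\tfrac14\Delta\phi(r)=\infty$; you instead integrate the identity twice, first getting $r\phi'(r)\ge\tfrac{M}{2}(r^{2}-R^{2})+R\phi'(R)$ and hence $\phi'(r)/r\to\infty$, then $\phi(r)\ge\tfrac{N}{2}(r^{2}-R'^{2})+\phi(R')$ and hence $\phi(r)/r^{2}\to\infty$. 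The two arguments are mirror images (L'Hospital differentiates the quotient; you integrate the Laplacian), but yours is the more careful rendering: L'Hospital in the $\infty/\infty$ form transfers a limit from the derivative quotient \emph{back} to the original quotient, so the paper's chain of equalities is strictly justified only when read right to left, and its final step drops the limit sign altogether. Your $\liminf$ estimates with arbitrary $M$ and $N$ make this rigorous at the cost of a few extra lines, and all boundary terms are indeed harmless since $\phi\in C^{2}$. One cosmetic slip: after obtaining $r\phi'(r)\ge\tfrac{M}{2}(r^{2}-R^{2})+R\phi'(R)$ you should say you divide by $r^{2}$ (not by $r$) to conclude $\liminf_{r\to\infty}\phi'(r)/r\ge M/2$; the claim itself is right.
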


 \begin{proof} By L'Hospital's rule, we obtain $$\lim_{r\rightarrow\infty}\frac{\phi(r)}{r^{2}}=\lim_{r\rightarrow\infty}\frac{\phi'(r)}{2r}=\lim_{r\rightarrow\infty}\frac{r\phi'(r)}{2r^{2}}
 =\lim_{r\rightarrow\infty}\frac{(r\phi'(r))'}{4r}$$
 $$=\frac{1}{4}(\phi''(r)+\frac{\phi'(r)}{r})=\frac{1}{4}\Delta\phi(r).
 $$ Since $C^{-1} (\Delta \phi(|z|))^{-\frac{1}{2}}\leq\tau(z)$ and $\tau(z)$ decreases to $0$ as $|z|\rightarrow\infty$. Then $$\lim_{r\rightarrow\infty}\frac{\phi(r)}{r^{2}}=\infty.$$
 \end{proof}
  \begin{lem}\label{1} \cite{O.C} Suppose that $\phi$ is in the class $\mathcal{I}$ and $f,g\in F_{\phi}^{2}$. Then
 $$ \langle f,g\rangle=f(0)\overline{g(0)}+\int_{\mathbb{C}}f'(z)\overline{g'(z)}(1+\phi'(z))^{-2}e^{-2\phi(z)}dm(z)
 .$$
 \end{lem}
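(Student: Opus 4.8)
The plan is to deduce the identity from a second-order integration by parts, exploiting that holomorphy turns $f'\overline{g'}$ into a Laplacian. First I would reduce to the diagonal case $f=g$: both the left-hand side $\langle f,g\rangle=\int_{\mathbb{C}}f\overline g\,e^{-2\phi}\,dm$ and the right-hand side are sesquilinear in $(f,g)$, so by the polarization identity it suffices to establish
$$
\int_{\mathbb{C}}|f(z)|^{2}e^{-2\phi(z)}\,dm(z)=|f(0)|^{2}+\int_{\mathbb{C}}|f'(z)|^{2}\bigl(1+\phi'(|z|)\bigr)^{-2}e^{-2\phi(z)}\,dm(z)
$$
for every $f\in F_\phi^2$. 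The basic analytic input is that, since $f$ is entire, $\partial_{\bar z}\overline{f}=\overline{f'}$ and $\partial_z\overline{f'}=0$, whence
$$
\Delta\bigl(|f(z)|^{2}\bigr)=4\,\partial_z\partial_{\bar z}\bigl(f\overline f\bigr)=4\,|f'(z)|^{2}.
$$
Thus the derivative integral on the right is, up to the factor $4$, an integral of $\Delta(|f|^2)$ against the radial weight $w(z)=\tfrac14(1+\phi'(|z|))^{-2}e^{-2\phi(z)}$, which is exactly the configuration handled by Green's formula.

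Next I would apply Green's identity on the annulus $\{\varepsilon<|z|<R\}$ to the pair $|f|^2$ and $w$, namely
$$
\int_{\varepsilon<|z|<R}\bigl(|f|^{2}\,\Delta w-w\,\Delta|f|^{2}\bigr)\,dm=\int_{|z|=R}\Bigl(|f|^{2}\,\partial_n w-w\,\partial_n|f|^{2}\Bigr)\,ds-\int_{|z|=\varepsilon}\Bigl(\cdots\Bigr)\,ds.
$$
The program is to show that $\Delta w=e^{-2\phi}$ away from the origin, so that the bulk term $\int|f|^2\,\Delta w\,dm$ reproduces the left-hand side $\int|f|^2e^{-2\phi}\,dm$; that the inner boundary integral converges, as $\varepsilon\to0$, to $|f(0)|^{2}$ (the point-mass contribution of the potential $w$ at the origin); and that the outer boundary integral tends to $0$ as $R\to\infty$. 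For the last point the membership $f\in F_\phi^2$ together with Lemma 2.6 (which gives $\phi(r)/r^{2}\to\infty$, hence enough exponential decay of $e^{-2\phi}$ to beat any polynomial factor coming from $w,w'$ and from the circular means of $|f|^2$) should force the outer term to vanish; the uniform control of $\int_{0}^{2\pi}|f(re^{i\theta})|^{2}\,d\theta$ on large circles can be extracted from Lemma 2.5 by a standard subharmonicity argument.

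The hard part — indeed the crux of the whole lemma — is the exact identification of the radial weight, i.e.\ verifying that the single function $w=\tfrac14(1+\phi')^{-2}e^{-2\phi}$ simultaneously satisfies $\Delta w=e^{-2\phi}$ off the origin and carries precisely a unit point mass at $z=0$. This is where the rapid-increase hypotheses on $\phi$ enter decisively: the relations $\tau\asymp(\Delta\phi)^{-1/2}$, $\tau(r)\to0$ and $\tau'(r)\to0$ (equivalently, the controlled growth of $\phi'$ and the smallness of $(\log(1/\tau))'$) are what one must feed into the radial computation of $\Delta w=w''+\tfrac1r w'$ in order to match $e^{-2\phi}$. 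I expect this matching, rather than the soft functional-analytic reductions above, to be the step requiring the most care, and it is the point at which the precise form of the class $\mathcal I$ cannot be dispensed with.
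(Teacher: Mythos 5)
Your overall scheme (polarization to the diagonal, $\Delta|f|^{2}=4|f'|^{2}$, Green's identity on annuli) is the right machinery for Littlewood--Paley formulas, and you correctly identified the crux --- but that crux step fails, and in fact the statement you are trying to prove is false as an exact identity. First, the radial function $w=\tfrac14(1+\phi')^{-2}e^{-2\phi}$ does \emph{not} satisfy $\Delta w=e^{-2\phi}$: computing $\Delta w=w''+w'/r$ one finds $w'\approx-\tfrac12 e^{-2\phi}/\phi'$ and $\Delta w=e^{-2\phi}\bigl(1+O(\phi''/(\phi')^{2})+O(1/(r\phi'))\bigr)$, so the class-$\mathcal{I}$ hypotheses make the error $o(1)$ but cannot annihilate it; demanding exact equality is an ODE that pins $\phi$ down to isolated special solutions, not something any reasonable class of weights can guarantee. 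Second, the point-mass claim cannot work: since $\phi$ is twice continuously differentiable on $[0,\infty)$, $w$ is smooth and bounded at the origin, so the inner boundary integrals in Green's identity vanish as $\varepsilon\to0$ and no $|f(0)|^{2}$ term can emerge --- that would require a logarithmic singularity which $w$ does not have. Concretely, the identity already fails on trivial tests: for $f=g\equiv1$ the left side is $\int_{\mathbb{C}}e^{-2\phi}\,dm=\delta_{0}^{2}$ (in the paper's notation) while the right side is $1$; and for $f=g=z$ with $\phi(r)=r^{3}\in\mathcal{I}$ one has $\int_{0}^{\infty}r^{3}e^{-2r^{3}}dr\approx0.118$ against $\int_{0}^{\infty}r(1+3r^{2})^{-2}e^{-2r^{3}}dr\approx0.094$.

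For comparison: the paper offers no proof to check against --- Lemma 2.7 is quoted from \cite{O.C} --- and what that reference actually proves is the \emph{two-sided} Littlewood--Paley estimate, i.e.\ comparability of the two quantities with constants (the $p$-version appears as the paper's Lemma 2.8), not an equality of inner products; the lemma as printed is a misquotation. Your argument, correctly executed, proves exactly the true version: replace $w$ by the genuine potential $W$ defined by $rW'(r)=4\int_{0}^{r}te^{-2\phi(t)}dt-\tfrac{2}{\pi}\delta_{0}^{2}$, which satisfies $\Delta W=4e^{-2\phi}$ off the origin, decays at infinity, and carries the logarithmic singularity $-\tfrac{2}{\pi}\delta_{0}^{2}\log|z|$; Green's identity then yields the exact formula $\int_{\mathbb{C}}|f|^{2}e^{-2\phi}\,dm=\delta_{0}^{2}|f(0)|^{2}+\int_{\mathbb{C}}|f'|^{2}W\,dm$, and the class-$\mathcal{I}$ conditions give only the comparability $W\asymp(1+\phi')^{-2}e^{-2\phi}$, whence the norm equivalence. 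Since polarization transports identities but not equivalences, the sesquilinear formula of Lemma 2.7 cannot be recovered this way --- and this is not a defect of your method: it also puts in question the paper's later use of Lemma 2.7 as an exact identity in the proof of Theorem 1.6.
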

  \begin{lem}\label{1} \cite{O.C} Suppose that $\phi$ is in the class $\mathcal{I}$ and there exists $r_{0}>0$
such that $\phi'(r)\neq0$ for $r>r_{0}$. Moreover, assume that $\phi$ satisfies
$$
\lim_{r\rightarrow \infty}\frac{re^{-p\phi(r)}}{\phi'(r)}=0,
$$
and
$$
-\infty
<\liminf_{r\rightarrow\infty}\frac{1}{r}\bigg( \frac{r}{\phi'(r)}\bigg)'
\leq \limsup_{r\rightarrow\infty}\frac{1}{r}\bigg( \frac{r}{\phi'(r)}\bigg)'<p,
$$
where $p\geq1$. Then for any entire function $f(z)$,
   $$C^{-1} \|f\|^{p}_{F_{\phi}^{p}}\leq|f(0)|^{p}+\int_{\mathbb{C}}|f'(z)|\frac{e^{-p\phi(|z|)}}{(1+\phi'(|z|))^{p}}dm(z)\leq C \|f\|^{p}_{F_{\phi}^{p}},$$ where $C$ only depends on $p$.
 \end{lem}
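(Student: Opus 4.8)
The plan is to prove the two inequalities of the equivalence separately, writing $\rho(z)=(1+\phi'(|z|))^{-1}$ for the length scale attached to the weight and reading the middle integrand as $|f'(z)|^p$ (as the homogeneity of the estimate requires), so that the middle quantity is $M(f):=|f(0)|^p+\int_{\mathbb C}|f'(z)|^p\rho(z)^p e^{-p\phi(z)}\,dm(z)$. Throughout I would lean on the basic features of the class $\mathcal I$: $\phi'\to\infty$, $\rho(z)\le C\tau(z)$, and—most importantly—the weight is essentially constant on $D(z,\rho(z))$, i.e. $C^{-1}e^{-p\phi(z)}\le e^{-p\phi(w)}\le Ce^{-p\phi(z)}$ for $w\in D(z,\rho(z))$, which holds because the first–order change $|\nabla\phi(z)|\rho(z)=\phi'(|z|)/(1+\phi'(|z|))\le1$ and the higher–order variation is controlled by the defining properties of $\mathcal I$.

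For the upper estimate $M(f)\le C\|f\|_{F_\phi^p}^p$ I would argue pointwise. Cauchy's formula on $|w-z|=\rho(z)/2$ together with subharmonicity of $|f|^p$ gives $|f'(z)|^p\le C\rho(z)^{-p-2}\int_{D(z,\rho(z))}|f(w)|^p\,dm(w)$. Multiplying by $\rho(z)^p e^{-p\phi(z)}$ and using the flatness of the weight to trade $e^{-p\phi(z)}$ for $e^{-p\phi(w)}$ under the integral yields $|f'(z)|^p\rho(z)^p e^{-p\phi(z)}\le C\rho(z)^{-2}\int_{D(z,\rho(z))}|f(w)|^p e^{-p\phi(w)}\,dm(w)$. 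Integrating in $z$ and applying Fubini (a fixed $w$ lies in $D(z,\rho(z))$ only for $z$ in a set of measure comparable to $\rho(w)^2$, on which $\rho(z)\approx\rho(w)$) bounds this by $C\|f\|_{F_\phi^p}^p$; the term $|f(0)|^p\le C\|f\|_{F_\phi^p}^p$ is the same subharmonicity estimate at the origin.

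The substantive direction is the lower estimate $\|f\|_{F_\phi^p}^p\le C\,M(f)$, and it is here that the two extra hypotheses on $\phi$ enter. Passing to polar coordinates and setting $V(r)=\int_r^\infty s\,e^{-p\phi(s)}\,ds$, so that $V'(r)=-re^{-p\phi(r)}$, a radial integration by parts on each ray gives $\int_0^\infty|f(re^{i\theta})|^pe^{-p\phi(r)}r\,dr=|f(0)|^pV(0)+\int_0^\infty\big(|f|^p\big)'V(r)\,dr\le |f(0)|^pV(0)+p\int_0^\infty|f|^{p-1}|f'|\,V(r)\,dr$, using $\big|\big(|f|^p\big)'\big|\le p|f|^{p-1}|f'|$. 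The key weight estimate is $V(r)\le C\,r\,e^{-p\phi(r)}/\phi'(r)$ for large $r$: one more integration by parts gives $V(r)=\frac{r e^{-p\phi(r)}}{p\phi'(r)}+\frac1p\int_r^\infty\big(\tfrac{s}{\phi'(s)}\big)'e^{-p\phi(s)}\,ds$, where the boundary term at infinity vanishes by $\lim_{r\to\infty}re^{-p\phi(r)}/\phi'(r)=0$; since $\limsup_{r\to\infty}\frac1r(\tfrac{r}{\phi'})'<p$, the remaining integral is at most $(1-\varepsilon)V(r)$ for large $r$ and can be absorbed, leaving $V(r)\le C\,re^{-p\phi(r)}/\phi'(r)\approx C\,r\,\rho(r)\,e^{-p\phi(r)}$ (the hypothesis $\liminf>-\infty$ keeps $V$ positive and the absorption legitimate). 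Substituting this and integrating in $\theta$, the cross term is $\le C\int_{\mathbb C}|f|^{p-1}|f'|\rho\,e^{-p\phi}\,dm$, and Hölder with exponents $p/(p-1)$ and $p$ factors it as $\le C\|f\|_{F_\phi^p}^{p-1}\big(\int_{\mathbb C}|f'|^p\rho^p e^{-p\phi}\,dm\big)^{1/p}$; Young's inequality then absorbs $\|f\|_{F_\phi^p}^{p-1}$ into the left side and yields $\|f\|_{F_\phi^p}^p\le C\,M(f)$.

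I expect the lower estimate to be the main obstacle, and within it the weight bound $V(r)\le C\,re^{-p\phi(r)}/\phi'(r)$: this is precisely where $\lim_{r\to\infty}re^{-p\phi}/\phi'=0$ and $\limsup_{r\to\infty}\frac1r(r/\phi')'<p$ are indispensable, the latter supplying the factor $(1-\varepsilon)<1$ that makes the absorption work. A secondary technical point is justifying the radial integration by parts, whose boundary term at infinity is $\lim_{r\to\infty}|f(re^{i\theta})|^pV(r)$: when $\|f\|_{F_\phi^p}<\infty$ this vanishes along some sequence $r_n\to\infty$ because $V(r)\approx re^{-p\phi}/\phi'$ and $\int_0^\infty|f|^pe^{-p\phi}r\,dr<\infty$, while the general (possibly infinite-norm) case is handled by first applying the inequality to the Taylor polynomials of $f$ and letting the degree tend to infinity. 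Together the two inequalities give the stated equivalence.
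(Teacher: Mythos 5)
The paper offers no proof of this lemma to compare against: it is quoted verbatim from the reference [O.C] (Constantin--Pel\'aez), whose hypotheses $\lim_{r\to\infty}re^{-p\phi(r)}/\phi'(r)=0$ and $-\infty<\liminf\frac1r(\frac{r}{\phi'})'\le\limsup\frac1r(\frac{r}{\phi'})'<p$ are exactly engineered for the radial integration by parts you perform. Your reconstruction is essentially the argument of that cited source: the identity $V(r)=\frac{1}{p}\,\frac{re^{-p\phi(r)}}{\phi'(r)}+\frac1p\int_r^\infty\bigl(\frac{s}{\phi'(s)}\bigr)'e^{-p\phi(s)}\,ds$, the absorption of the tail integral using $\limsup\frac1r(\frac{r}{\phi'})'<p$, and the H\"older--Young splitting of the cross term are the heart of the matter, and you locate them correctly. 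You are also right to read the middle integrand as $|f'(z)|^p$: the exponent is missing in the statement as printed (a typo; homogeneity forces it, and the cited result carries the $p$-th power).

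Two soft spots deserve attention. First, your final absorption $\|f\|_{F^p_\phi}^p\le C|f(0)|^p+\varepsilon\|f\|_{F^p_\phi}^p+C_\varepsilon\int_{\mathbb C}|f'|^p\rho^pe^{-p\phi}\,dm$ is only legitimate when $\|f\|_{F^p_\phi}<\infty$ a priori, and your proposed remedy via Taylor polynomials does not obviously close: Fatou gives $\|f\|\le\liminf_n\|f_n\|$, but you would also need $\limsup_n\int|f_n'|^p\rho^pe^{-p\phi}\,dm\lesssim\int|f'|^p\rho^pe^{-p\phi}\,dm$, and partial sums of a Taylor series are not dominated in such weighted norms. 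The clean repair is to integrate by parts on $[0,R]$ instead of $[0,\infty)$: the boundary term $-|f(Re^{i\theta})|^pV(R)\le0$ can simply be discarded, the truncated norm $\int_{|z|<R}|f|^pe^{-p\phi}\,dm$ is finite for every $R$, so absorption is valid for each $R$ with a bound uniform in $R$, and one lets $R\to\infty$; this also makes your ``sequence $r_n\to\infty$'' discussion of the boundary term unnecessary. Second, the flatness $e^{-p\phi(w)}\approx e^{-p\phi(z)}$ on $D(z,\rho(z))$ in your upper estimate needs $\phi'(|w|)\lesssim\phi'(|z|)$ across the disk, i.e.\ $\phi''\lesssim(\phi')^2$, which does not follow from membership in $\mathcal I$ alone; it does follow for large $r$ from $\liminf\frac1r(\frac{r}{\phi'})'>-\infty$, since $(\frac{r}{\phi'})'=\frac1{\phi'}-\frac{r\phi''}{(\phi')^2}\ge-Cr$ rearranges to $\phi''\le\frac{\phi'}{r}+C(\phi')^2$. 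So the $\liminf$ hypothesis is not, as you suggest, merely ``keeping $V$ positive'' ($V$ is positive automatically, being an integral of a positive function); it is what makes your upper estimate honest. Alternatively, that direction can be run as in the cited source, via the subharmonicity estimate at scale $\tau(z)$ (after the standard harmonic-correction of $\phi$, which is \emph{not} flat at scale $\tau$) combined with $\rho(z)\lesssim\tau(z)$. With these repairs your proof is correct and is, in substance, the proof from the reference the paper cites.
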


\section{Proof of Theorems $1.1$ and Theorems $1.2$}

\begin{proof}[\bf Proof of Theorem $1.1$] Suppose that $f$ is a solution of $(1)$, then  $f(z)$ is an entire function by Lemma $2.1$.  By Lemma $2.2$,
\begin{equation*}
\begin{aligned}
\|f\|_{p}&\leq C\bigg( \sum_{\alpha\leq k-1}|f^{(\alpha)}(0)|+\bigg(\int_{\mathbb{C}}\bigg|f^{(k)}(z)\frac{1}{(1+|z|)^{k}}e^{-\frac{1}{2}|z|^{2}}\bigg|^{p}dm(z)\bigg)^{\frac{1}{p}}\bigg)\\
&=C_{1}+C\bigg(\int_{\mathbb{C}}\left|f^{(k)}(z)\frac{1}{(1+|z|)^{k}}e^{-\frac{1}{2}|z|^{2}}\right|^{p}dm(z)\bigg)^{\frac{1}{p}},\\
\end{aligned}
\end{equation*}
 where $C_{1}=C\sum_{\alpha\leq k-1}|f^{(\alpha)}(0)|.$ Using $(1)$ and Minkowski inequality,
  \begin{equation*}
\begin{aligned}
\|f\|_{p}
&\leq C_{1}+C\brac{\int_{\mathbb{C}}\bigg|\bigg(\sum_{i=0}^{k-1}A_i(z)f^{(i)}(z)-A_k(z)\bigg)\frac{e^{-\frac{1}{2}|z|^{2}}}{(1+|z|)^{k}}\bigg|^{p}dm(z)}^{\frac{1}{p}}\\
&\leq C_{1}+C\brac{\sum_{i=0}^{k-1}\brac{\int_{\mathbb{C}}\bigg|A_i(z)f^{(i)}(z)\frac{e^{-\frac{1}{2}|z|^{2}}}
{(1+|z|)^{k}}\bigg|^{p}dm(z)}^{\frac{1}{p}}}\\
&\quad +\brac{\int_{\mathbb{C}}\abs{\frac{A_{k}(z)e^{-\frac{1}{2}|z|^{2}}}{(1+|z|)^{k}}}^{p}dm(z)}^{\frac{1}{p}},\\
&\leq C_{1}+C\bigg(\sum_{i=0}^{k-1}
\sup_{z\in\mathbb{C}}\bigg\{\frac{|A_{i}(z)|}{(1+|z|)^{k-i}}\bigg\}
\bigg(\int_{\mathbb{C}}
\bigg|f^{(i)}(z)\frac{e^{-\frac{1}{2}|z|^{2}}}{(1+|z|)^{i}}\bigg|^{p}
dm(z)\bigg)^{\frac{1}{p}}\\
&\quad +(\int_{\mathbb{C}}\bigg|\frac{A_{k}(z)e^{-\frac{1}{2}|z|^{2}}}{(1+|z|)^{k}}\bigg|^{p}dm(z))^{\frac{1}{p}}\bigg).
\end{aligned}
\end{equation*}
  Using Lemma $2.2$ again,
 \begin{equation*}
\begin{aligned}
\|f(z)\|_{p}&\leq  C_{1}+C\bigg(\sum_{i=0}^{k-1}\sup_{z\in\mathbb{C}}\{\frac{|A_{i}|}{(1+|z|)^{k-i}}\}D_{i}\|f\|_{p}+
D_{k}\|\varphi_{k}\|_{p}\bigg),
\end{aligned}
\end{equation*}
where  $\varphi_{k}(z)$ is the $k$th primitive function of $A_{k}(z)$. Therefore
$$
\|f\|_{p}\brac{1-C\brac{\sum_{i=0}^{k-1}\sup_{z\in\mathbb{C}}\set{\frac{|A_{i}|}{(1+|z|)^{k-i}}}D_{i}}}\leq  C_{1}+
D_{k}\|\varphi_{k}\|_{p}
$$
If $\|f\|_{p}=\infty$, it  contradicts  the condition of Theorem $1.1$. Therefore,  $$
\|f(z)\|_{p}\leq\frac{C_{1}+CD_{k}\|\varphi_{k}\|_{p}}{1-C\sum_{i=0}^{k-1}D_{i}\sup_{z\in\mathbb{C}}\{\frac{|A_{i}|}
{(1+|z|)^{k-i}}\}}<+\infty$$  and $f\in F^{p}$.
\end{proof}

\begin{proof}[\bf Proof of Theorem $1.2$] Suppose that $f(z)$ is a solution of $(1)$, then  $f(z)$ is an entire function by Lemma $2.1$. Using Lemma $2.2$, $(1)$ and  Minkowski inequality,
\begin{equation*}
\begin{aligned}
\|f(z)\|_{F^{p,k}}&\leq \sum_{i=0}^{k} C_{i}\bigg( \sum_{\alpha=i}^{k-1}|f^{(\alpha)}(0)|+(\int_{\mathbb{C}}|f^{(k)}(z)\frac{1}{(1+|z|)^{k-i}}
e^{-\frac{1}{2}|z|^{2}}|^{p}dm(z))^{\frac{1}{p}}\bigg)\\&=D_{1}+\sum_{i=0}^{k} C_{i}\bigg(\int_{\mathbb{C}}|(A_{k-1}f^{(k-1)}+\cdots+A_{0}(z)f-A_{k})\frac{e^{-\frac{1}{2}|z|^{2}}}{(1+|z|)^{k-i}}|^{p}dm(z)\bigg)^{\frac{1}{p}}\\&\leq D_{1}+\sum_{i=0}^{k} C_{i}\bigg(\sum_{j=0}^{k-1}\int_{\mathbb{C}}|A_{j}f^{(j)}\frac{e^{-\frac{1}{2}|z|^{2}}}{(1+|z|)^{k-i}}|^{p}dm(z)\bigg)^{\frac{1}{p}}\\&+\sum_{i=0}^{k} C_{i}\bigg(\int_{\mathbb{C}}|\frac{A_{k}e^{-\frac{1}{2}|z|^{2}}}{(1+|z|)^{k-i}}|^{p}dm(z)\bigg)^{\frac{1}{p}}\\& \leq D_{1}+\sum_{i=0}^{k} C_{i}\sum_{j=0}^{k-1}\sup_{z\in\mathbb{C}}\bigg\{\frac{|A_{j}|}{(1+|z|)^{k-i-j}}\bigg\}
\bigg(\int_{\mathbb{C}}|f^{(j)}\frac{e^{-\frac{1}{2}|z|^{2}}}{(1+|z|)^{j}}|^{p}dm(z)\bigg)^{\frac{1}{p}}\\&+\sum_{i=0}^{k} C_{i}\bigg(\int_{\mathbb{C}}\bigg|\frac{A_{k}e^{-\frac{1}{2}|z|^{2}}}{(1+|z|)^{k-i}}\bigg|^{p}dm(z)\bigg)^{\frac{1}{p}},
\end{aligned}
\end{equation*}where $D_{1}=\sum_{i=0}^{k} C_{i} \sum_{\alpha=i}^{k-1}|f^{(\alpha)}(0)|$.
Using Lemma $2.2$ again,
 \begin{equation*}
\begin{aligned}
\|f(z)\|_{F^{p,k}}&\leq D_{1}+\sum_{i=0}^{k} C_{i}\sum_{j=0}^{k-1}\sup_{z\in\mathbb{C}}E_{j}\bigg\{\frac{|A_{j}|}{(1+|z|)^{k-i-j}}\bigg\}\|f\|_{p}+\sum_{i=0}^{k} C_{i}F_{i}\|\varphi_{k}^{(i)}\|_{p}\\&\leq D_{1}+\sum_{i=0}^{k} C_{i}\bigg(\sum_{j=0}^{k-1}E_{j}\sup_{z\in\mathbb{C}}\bigg\{\frac{|A_{j}|}{(1+|z|)^{k-i-j}}\bigg
\}\bigg)\|f\|_{F^{p,k}}+\sum_{i=0}^{k} C_{i}F_{i}\|\varphi_{k}^{(i)}\|_{p}\\&\leq D_{1}+\sum_{i=0}^{k} C_{i}\bigg(\sum_{j=0}^{k-1}\sup_{z\in\mathbb{C}}E_{j}\bigg\{\frac{|A_{j}|}{(1+|z|)^{k-i-j}}\bigg
\}\bigg)\|f\|_{F^{p,k}}+ G\|\varphi_{k}\|_{F^{p,k}}
\end{aligned}
\end{equation*}
 where $G=\max_{0\leq i\leq k}\{C_{i}F_{i}\}$.So$$\|f(z)\|_{F^{p,k}}\bigg(1-  \sum_{i=0}^{k} C_{i}\big(\sum_{j=0}^{k-1}\sup_{z\in\mathbb{C}}E_{j}\bigg\{\frac{|A_{j}|}{(1+|z|)^{k-i-j}}\bigg
\}\big) \bigg)\leq D_{1}+ G\|\varphi_{k}\|_{F^{p,k}}.$$If $\|f\|_{ F^{p,k}}=\infty$, it  contradicts  the condition of Theorem $1.2$. Therefore,
   $$
\|f(z)\|_{F^{p,k}}\leq\frac{D_{1}+G\|\varphi_{k}\|_{F^{p,k}}}{1-\sum_{i=0}^{k} C_{i}\big(\sum_{j=0}^{k-1}E_{j}\sup_{z\in\mathbb{C}}\{\frac{|A_{j}|}{(1+|z|)^{k-i-j}}\}\big)}<+\infty$$ and
  $f\in F^{p,k}$.
\end{proof}
 \section{ Proof of Theorem $1.3$ }
 \begin{proof} [\bf Proof of Theorem $1.3$]   If $f(z)$ is a solution of  $(1)$, from Lemma $2.3$,  $$|f(re^{i\theta})|\leq C  ( \max_{0\leq x\leq r}|A_{k}(x e^{i\theta})|+1)\exp\int_{0}^{r}(\delta+k_{c}\max_{0\leq j\leq k-1}|A_{j}(se^{i\theta})|^{\frac{1}{k-j}})ds.$$ Since $A_{k}(z)$ is not a constant function and there exists a nonconstant function among $A_{j}(z)$, $j=0,\cdots,k-1$,
  $$ |f(re^{i\theta})|\leq C  ( 2\max_{0\leq x\leq r}|A_{k}(x e^{i\theta})|)\exp\int_{0}^{r}(2 k_{c}\max_{0\leq j\leq k-1}|A_{j}(se^{i\theta})|^{\frac{1}{k-j}})ds
 $$for sufficiently large $r>r_{1}$. Since $|A_{j}(re^{i\theta})|\leq \phi^{\frac{1}{2}}(r)/r$, $r>r_{0}$, we have $$ |f(re^{i\theta})|\leq2C D \frac{\phi^{\frac{1}{2}}(r)}{r}\exp\bigg ( 2k_{c}\max_{0\leq j\leq k-1}\int_{R}^{r}(\frac{\phi^{\frac{1}{2}}(s)}{s})^{\frac{1}{k-j}}ds\bigg)$$ for $r>R=\max\{r_{0},r_{1},1\}$, where $$D=\exp \int_{0}^{R}(2 k_{c}\max_{0\leq j\leq k-1}|A_{j}(se^{i\theta})|^{\frac{1}{k-j}})ds.$$
Since
$$
\max_{0\leq j\leq k-1}
\int_{R}^{r}\brac{\frac{\phi^{\frac{1}{2}}(s)}{s}}^{\frac{1}{k-j}}ds
\leq\phi^{\frac{1}{2}}(r)\int_R^r s^{-\frac{1}{k}}
\leq\phi^{\frac{1}{2}}(r)\frac{k}{k-1}r^{1-\frac{1}{k}},
$$
we have
$$
|f(re^{i\theta})|
\leq
D_1\frac{\phi^{\frac{1}{2}}(r)}{r}\exp\brac{D_2\phi^{\frac{1}{2}}(r)r^{1-\frac{1}{k}}},
\quad R<r<\infty,
$$
where $D_{1}=2CD$ and $D_{2}=2k_{c}\frac{k}{k-1}$. Since $\phi$ is in the class $\mathcal{I}$ and Lemma $2.6$, there exists $R'>0$ such that $\phi(r)>r^{2}$ for $r>R'$.
Since $f(z)$ is  an entire function, by using Lemma $2.5$,

\begin{equation*}
\begin{aligned}
\| f\|^{p}_{F_{\phi}^{p,q}}&= \int_{\mathbb{C}}|f(z)|^{p}e^{-p\phi(z)}\phi^{q}(z) dm(z)\\
&\leq M\int_{|z|\geq R_{1}}|f(z)|^{p}e^{-p\phi(z)}\phi^{q}(z) dm(z)\\
  &\leq M\int_{0}^{2\pi}\int_{R_{1}}^{\infty}\bigg(D_{1}\frac{\phi^{\frac{1}{2}}(r)}{r}\exp \big( D_{2}\phi^{\frac{1}{2}}(r)r^{1-\frac{1}{k}}\big)\bigg)^{p}e^{-p\phi(r)}\phi^{q}(r)rdrd\theta\\&
\leq MD^{p}_{1}\int_{0}^{2\pi}\int_{R_{1}}^{\infty}\bigg(\frac{\phi^{\frac{p}{2}+q}(r)}{r^{p-1}}\exp\big(p(D_{2}\phi^{\frac{1}{2}}(r)r^{1-\frac{1}{k}}-\phi(r))\big)\bigg)drd\theta\\& \leq 2\pi MD^{p}_{1}\int_{R_{1}}^{\infty}\bigg(\frac{\phi^{\frac{p}{2}+q}(r)}{r^{p-1}}e^{p(D_{2}\phi^{\frac{1}{2}}(r)r^{1-\frac{1}{k}}-\phi(r))}\bigg)dr,
 \end{aligned}
\end{equation*}
 where $R_{1}=\max\{R,R'\}$. Since $D_{2}r^{1-\frac{1}{k}}-\phi^{\frac{1}{2}}(r)<-1$, when $r>R_{1}$. Thus  $$\| f\|^{p}_{F_{\phi}^{p,q}}\leq 2\pi MD^{p}_{1}\int_{R}^{\infty}\bigg(\frac{\phi^{\frac{p}{2}+q}(r)}{r^{p-1}}e^{-p\phi^{\frac{1}{2}}(r)}\bigg)dr<\infty.$$ Therefore,  $f\in F^{p,q} _{\phi}$.
 \end{proof}
 \section{Proof of Theorems $1.4$ and Theorem $1.5$}

 \begin{proof} [\bf Proof of Theorem $1.4$] By $(1)$,
 \begin{equation*}
\begin{aligned}
\|A_{k}\|_{p}&= \bigg(\int_{\mathbb{C}}|(f^{(k)}(z)+A_{k-1}(z)f^{(k-1)}(z)+\cdots+A_{0}(z)f(z))e^{-\frac{1}{2}|z|^{2}}|^{p}dm(z)\bigg)^{\frac{1}{p}}\\&\leq\bigg(\int_{\mathbb{C}}|f^{(k)}e^{-\frac{1}{2}|z|^{2}}|^{p}dm(z)\bigg)^{\frac{1}{p}}+\cdots+\bigg(\int_{\mathbb{C}}|A_{0}fe^{-\frac{1}{2}|z|^{2}}|^{p}dm(z)\bigg)^{\frac{1}{p}}\\&\leq \|f^{(k)}\|_{p}+|A_{k-1}| \|f^{(k-1)}\|_{p}+\cdots+|A_{0}|\|f\|_{p}\\&\leq C(\|f^{(k)}\|_{p}+ \|f^{(k-1)}\|_{p}+\cdots+\|f\|_{p}),
\end{aligned}
\end{equation*}where $C=\max_{i\leq k-1}\{|A_{i}|\}$.
 Since there exists a solution $f\in F^{p,k}$, we get $A_{k}\in F^{p}.$
 \end{proof}

 \begin{proof} [\bf Proof of Theorem $1.5$]
 Using $(1)$, we obtain
 $$
 |A_{0}(z)|\leq\bigg|\frac{A_{k}}{f}\bigg|+\bigg|\frac{f^{(k)}}{f}\bigg|
 +\cdots+\bigg|A_{1}\frac{f'}{f}\bigg|
 .$$
 By Lemma $2.4$, for $\alpha>1$, there exist sufficient large $r_{1}$ and a set $W$ of measure zero such that $$\bigg|\frac{f^{(j)}(re^{i\theta})}{f(re^{i\theta})}\bigg|\leq C\bigg(T(\alpha r,f)\frac{\log^{\alpha}r}{r}\log T(\alpha r,f)\bigg)^{j}, \quad j=1,\cdots,k ,$$for $r>r_{1}$ and $\theta\in [0,2\pi)\backslash W$, where $T(\alpha r,f)=\log^{+}M(\alpha r,f)$ and $M(\alpha r,f)$ is the maximum of $|f(\alpha re^{i\theta})|$ at $\theta\in [0,2\pi)$.
 By the condition of Theorem $1.5$, there exists a solution $f(z)$ such that for sufficient large $r_{2}$,
$$
T(\alpha r,f)=\log^{+}M(\alpha r,f)\leq\log{e^{e^{\alpha r}}}=e^{\alpha r},
$$
for $ r>r_{2}.$ Thus
\begin{equation*}
\begin{aligned}
|A_{0}(re^{i\theta})|&\leq\left|\frac{A_{k}(re^{i\theta})}{f(re^{i\theta})}\right|+C\bigg[(e^{\alpha r}\frac{\log^{\alpha}r}{r}\alpha r)^{k}+\cdots+|A_{1}(re^{i\theta})|e^{\alpha r}\frac{\log^{\alpha}r}{r}\alpha r\bigg],
\end{aligned}
\end{equation*}
 for $r>R=\max\{r_{1},r_{2}\}$ and $\theta\in [0,2\pi)\backslash W$. Since $A_{0}(z)$ is an entire function, using Lemma $2.5$,
  \begin{equation*}
\begin{aligned}
\|A_{0}\|^{p}_{F^{p,q}_{e^{r}}}&=\int_{\mathbb{C}}|A_{0}(z)|^{p}e^{-pe^{|z|}}(z)e^{q|z|}dm(z)\\&
\leq D\int_{0}^{2\pi}\int_{R}^{\infty}|A_{0}(re^{i\theta})|^{p}e^{-pe^{r}}(z)e^{qr}rdr d\theta.
\end{aligned}
\end{equation*}
It follows that
 \begin{equation*}
\begin{aligned}
\|A_{0}\|^{p}_{F^{p,q}_{e^{r}}}&\leq D\int_{0}^{2\pi}\int_{R}^{\infty}|\frac{A_{k}(re^{i\theta})}{f(re^{i\theta})}|^{p}e^{-pe^{r}}(z)e^{qr}rdr d\theta+D_{k}\int_{0}^{2\pi}\int_{R}^{\infty}
\frac{(\alpha e^{\alpha r}\log^{\alpha}r)^{pk}e^{qr}}{e^{pe^{r}}}rdr\\&+\cdots
 +D_{1}\int_{0}^{2\pi}\int_{R}^{\infty}
 |A_{1}(re^{i\theta})|^{p}\frac{(\alpha e^{\alpha r}\log^{\alpha}r)^{p}e^{qr}}{e^{pe^{r}}}rdrd\theta.
\end{aligned}
\end{equation*}
Since $p\geq1$, $$\int_{0}^{2\pi}\int_{R}^{\infty}
\frac{(\alpha e^{\alpha r}\log^{\alpha}r)^{pk}e^{qr}}{e^{pe^{r}}}rdr<\infty.$$
 Since $\frac{A_{k}(re^{i\theta})}{f(re^{i\theta})}\in F_{e^{r}}^{p,q}$,
  $$\int_{0}^{2\pi}\int_{R}^{\infty}|\frac{A_{k}(re^{i\theta})}{f(re^{i\theta})}|^{p}e^{-pe^{r}}(z)e^{qr}rdr d\theta<\infty.$$
Since $A _{j}(z)$  belong to $F_{\frac{1}{2}e^{r}}^{p}$, $j=1,\cdots,k-1$,
$$\int_{0}^{2\pi}\int_{R}^{\infty}|A_{j}(re^{i\theta})|^{p}\frac{(\alpha e^{\alpha r}\log^{\alpha}r)^{jp}e^{qr}}{e^{pe^{r}}}rdrd\theta\leq$$
$$\int_{0}^{2\pi}\int_{R}^{\infty}|A_{j}(re^{i\theta})|^{p}e^{-\frac{p}{2}e^{r}}rdrd\theta<\infty.$$
Therefore, $A_{0}(z)\in F_{e^{r}}^{p,q}$.
 \end{proof}
 \section{Proof of Theorems $1.6$ and Theorem $1.7$}
 \begin{proof} [\bf Proof of Theorem $1.6$] Let $f$ be any solution of $(2)$, then
 $$ f'(z)=-\int_{0}^{z}f(\zeta)A(\zeta)d\zeta+f'(0), \quad z\in\mathbb{C}.
 $$If $g$ satisfies the reproducing formula, $g\in F_{\phi}^{2}.$ Since $f$ is an entire function, there exists a finite Taylor expansion $f(z)=\sum_{j=0}^{\infty}a_{j}z^{j}$ such that $f_{n}=\sum_{j=0}^{n}a_{j}z^{j}\in F_{\phi}^{2}$. Therefore, $$ f'(z)=-\int_{0}^{z}\lim_{n\rightarrow\infty}f_{n}(\zeta)A(\zeta)d\zeta+f'(0), \quad z\in\mathbb{C}.
 $$
 By  the reproducing formula and Fubini's theorem, for $\phi$ in the class $\mathcal{I}$, we get
 \begin{equation*}
\begin{aligned}
 f'(z)&=-\int_{0}^{z}\big( \lim_{n\rightarrow\infty}\int_{\mathbb{C}}f_{n}(\eta)\overline{K_{\zeta}(\eta)}e^{-2\phi(\eta)}dm(\eta)\big)A(\zeta)d\zeta+f'(0)\\&
 =-\int_{\mathbb{C}} \lim_{n\rightarrow\infty}f_{n}(\eta)e^{-2\phi(\eta)}\big(\int_{0}^{z}\overline{K_{\zeta}(\eta)}A(\zeta)d\zeta\big)dm(\eta)+f'(0),\quad z\in\mathbb{C}.
 \end{aligned}
\end{equation*}
 Since $K_{\zeta}(0)=\Sigma_{n=0}^{\infty}e_{n}(\zeta)\overline{e_{n}(0)}=\delta_{0}^{-2}$, and using Lemma $2.7$, we get  \begin{equation*}
\begin{aligned}
 f'(z)&=
-\int_{\mathbb{C}} \lim_{n\rightarrow\infty}f_{n}'(\eta)\big(\int_{0}^{z}\overline{K'_{\zeta}(\eta)}A(\zeta)d\zeta\big)(1+\phi'(\eta))^{-2}e^{-2\phi(\eta)}dm(\eta)-\\&
 \lim_{n\rightarrow\infty}f_{n}(0)\big(\int_{0}^{z}\overline{K_{\zeta}(0)}A(\zeta)d\zeta\big)+f'(0)\\&=-\int_{\mathbb{C}}
 f'(\eta)\big(\int_{0}^{z}\overline{K'_{\zeta}(\eta)}A(\zeta)d\zeta\big)(1+\phi'(\eta))^{-2}e^{-2\phi(\eta)}dm(\eta)-\\&
 f(0)\big(\int_{0}^{z}\delta_{0}^{-2}A(\zeta)d\zeta\big)+f'(0),\quad z\in\mathbb{C}.
 \end{aligned}
\end{equation*}
It follows that\begin{equation*}
\begin{aligned}
|f'(z)|e^{-\phi(z)}&\leq
\sup_{\eta\in \mathbb{C}}\big\{|f'(\eta)|e^{-\phi(\eta)}\big\}|\int_{\mathbb{C}}\big(\int_{0}^{z}\overline{K'_{\zeta}(\eta)}A(\zeta)d\zeta\big)
(1+\phi'(\eta))^{-2}e^{-\phi(\eta)}dm(\eta)|e^{-\phi(z)}\\&
 +|f(0)\big(\int_{0}^{z}\delta_{0}^{-2}A(\zeta)d\zeta\big)|e^{-\phi(z)}+|f'(0)|,\quad z\in\mathbb{C}.
 \end{aligned}
\end{equation*}

Then, we get
\begin{equation*}
\begin{aligned}
\|f'\|_{F_{\phi}^{\infty}}&\leq
\|f'\|_{F_{\phi}^{\infty}}\sup_{z\in \mathbb{C}}\big\{|\int_{\mathbb{C}}\big(\int_{0}^{z}\overline
{K'_{\zeta}(\eta)}A(\zeta)d\zeta\big)(1+\phi'(\eta))^{-2}e^{-\phi(\eta)}dm(\eta)|e^{-\phi(z)}\big\}\\&
 +\sup_{z\in \mathbb{C}}\big\{|f(0)\big(\int_{0}^{z}\delta_{0}^{-2}A(\zeta)d\zeta\big)|e^{-\phi(z)}\big\}+|f'(0)|
 \end{aligned}
\end{equation*}
Thus $$
\|f'\|_{F_{\phi}^{\infty}}\bigg(1-\sup_{z\in \mathbb{C}}\big\{|\int_{\mathbb{C}}\big(\int_{0}^{z}\overline
{K'_{\zeta}(\eta)}A(\zeta)d\zeta\big)(1+\phi'(\eta))^{-2}e^{-\phi(\eta)}dm(\eta)|e^{-\phi(z)}\big\}\bigg)
 $$$$\leq\sup_{z\in \mathbb{C}}\big\{|f(0)\big(\int_{0}^{z}\delta_{0}^{-2}A(\zeta)d\zeta\big)|e^{-\phi(z)}\big\}+|f'(0)|
$$
If $\|f'\|_{F_{\phi}^{\infty}}=\infty$, it  contradicts  the condition of Theorem $1.6$. Therefore, $$
\|f'\|_{F_{\phi}^{\infty}}\leq\frac{1}{1-X_{K}(A)}\bigg(\sup_{z\in \mathbb{C}}\big\{|f(0)\big(\int_{0}^{z}\delta_{0}^{-2}A(\zeta)d\zeta\big)|e^{-\phi(z)}\big\}+|f'(0)|\bigg)<\infty
$$ and $f'\in F_{\phi}^{\infty}.$
 \end{proof}
 \begin{proof} [\bf Proof of Theorem $1.7$] By  the proof of Theorem $1.6$, we get   \begin{equation*}
\begin{aligned}
 f'(z)&=-\int_{\mathbb{C}}
 f'(\eta)\bigg(\int_{0}^{z}\overline{K'_{\zeta}(\eta)}A(\zeta)d\zeta\bigg)(1+\phi'(\eta))^{-2}e^{-2\phi(\eta)}dm(\eta)-\\&
 f(0)\big(\int_{0}^{z}\delta_{0}^{-2}A(\zeta)d\zeta\big)+f'(0),\quad z\in\mathbb{C}.
 \end{aligned}
\end{equation*}
It follows that\begin{equation*}
\begin{aligned}
|f'(z)|e^{-\frac{1}{2}|z|^{2}}&\leq
\|f'\|_{\infty}|\int_{\mathbb{C}}\big(\int_{0}^{z}\overline{K'_{\zeta}(\eta)}A(\zeta)d\zeta\big)(1+\phi'(\eta))^{-2}
e^{-2\phi(\eta)+\frac{1}{2}|\eta|^{2}}dxdy|e^{-\frac{1}{2}|z|^{2}}\\&
 +|f(0)\big(\int_{0}^{z}\delta_{0}^{-2}A(\zeta)d\zeta\big)|e^{-\frac{1}{2}|z|^{2}}+|f'(0)|,\quad z\in\mathbb{C}.
 \end{aligned}
\end{equation*}

Then, we get
\begin{equation*}
\begin{aligned}
\|f'\|_{\infty}&\leq
\|f'\|_{\infty}\sup_{z\in \mathbb{C}}\bigg\{|\int_{\mathbb{C}}\big(\int_{0}^{z}\overline
{K'_{\zeta}(\eta)}A(\zeta)d\zeta\big)(1+\phi'(\eta))^{-2}e^{-2\phi(\eta)+\frac{1}{2}|\eta|^{2}}dm(\eta)|e^{-\frac{1}{2}|z|^{2}}\bigg\}\\&
 +\sup_{z\in \mathbb{C}}\big\{|f(0)\big(\int_{0}^{z}\delta_{0}^{-2}A(\zeta)d\zeta\big)|e^{-\frac{1}{2}|z|^{2}}\big\}+|f'(0)|
 \end{aligned}
\end{equation*}
Thus $$
\|f'\|_{\infty}\bigg(1-\sup_{z\in \mathbb{C}}\bigg\{|\int_{\mathbb{C}}\big(\int_{0}^{z}\overline
{K'_{\zeta}(\eta)}A(\zeta)d\zeta\big)(1+\phi'(\eta))^{-2}e^{-2\phi(\eta)+\frac{1}{2}|\eta|^{2}}dm(\eta)|e^{-\frac{1}{2}|z|^{2}}\bigg\}\bigg)
 $$$$\leq\sup_{z\in \mathbb{C}}\big\{|f(0)\big(\int_{0}^{z}\delta_{0}^{-2}A(\zeta)d\zeta\big)|e^{-\frac{1}{2}|z|^{2}}\big\}+|f'(0)|
$$
If $\|f'\|_{\infty}=\infty$, it  contradicts  the condition of Theorem $1.7$. Therefore, $$
\|f'\|_{\infty}\leq\frac{1}{1-Y_{K}(A)}\bigg(\sup_{z\in \mathbb{C}}\big\{|f(0)\big(\int_{0}^{z}\delta_{0}^{-2}A(\zeta)d\zeta\big)|e^{-\frac{1}{2}|z|^{2}}\big\}+|f'(0)|\bigg)<\infty
$$ and $f'\in F^{\infty}.$
  \end{proof}
\section{Proof of Theorems $1.8$ }
  \begin{proof} [\bf Proof of Theorem $1.8$]  By  the proof of Theorem $1.6$, we get   \begin{equation*}
\begin{aligned}
 f'(z)& =-\int_{\mathbb{C}} \lim_{n\rightarrow\infty}f_{n}(\eta)e^{-2\phi(\eta)}\big(\int_{0}^{z}\overline{K_{\zeta}(\eta)}A(\zeta)d\zeta\big)dm(\eta)+f'(0)\\&
 =-\int_{\mathbb{C}}f(\eta)e^{-2\phi(\eta)}\big(\int_{0}^{z}\overline{K_{\zeta}(\eta)}A(\zeta)d\zeta\big)dm(\eta)+f'(0),\quad z\in\mathbb{C}.
 \end{aligned}
\end{equation*}
Since the condition of Theorem $1.8$ satisfies Lemma $2.8$, we get  $$ \|f\|^{2}_{F_{\phi}^{2}}\leq C\bigg(|f(0)|^{2}+\int_{\mathbb{C}}|f'(z)|\frac{e^{-2\phi(|z|)}}{(1+\phi'(|z|))^{2}}dm(z)\bigg)$$
$$\leq C\bigg(|f(0)|^{2}+\int_{\mathbb{C}}|\int_{\mathbb{C}}f(\eta)e^{-2\phi(\eta)}\big(\int_{0}^{z}\overline{K_{\zeta}(\eta)}A(\zeta)d\zeta\big)dm(\eta)|
\frac{e^{-2\phi(|z|)}}{(1+\phi'(|z|))^{2}}dm(z)+
$$
$$|f'(0)|\int_{\mathbb{C}}\frac{e^{-2\phi(|z|)}}{(1+\phi'(|z|))^{2}}dm(z)\bigg),\quad z\in\mathbb{C}.
$$
 By Lemma $2.8$, we get  $$\int_{\mathbb{C}}\frac{e^{-2\phi(|z|)}}{(1+\phi'(|z|))^{2}}dm(z)\leq  C \|z\|^{2}_{F_{\phi}^{2}}.$$ Using Lemma $2.6$, it is easy to get that there exists a positive number $M$ such that $ \|z\|^{2}_{F_{\phi}^{2}}<M$. Therefore,  $$\|f\|^{2}_{F_{\phi}^{2}}\leq P+ \|f\|^{2}_{F_{\phi}^{2}}\sup_{\eta\in\mathbb{C}}\int_{\mathbb{C}}\big(\int_{0}^{z}\overline{K_{\zeta}(\eta)}A(\zeta)d\zeta\big)\frac{e^{-2\phi(|z|)}}{(1+\phi'(|z|))^{2}}dm(z)$$
where $P=C(|f(0)|^{2}+|f'(0)|CM)$. Then $$\|f\|^{2}_{F_{\phi}^{2}}\bigg(1- \sup_{\eta\in\mathbb{C}}\int_{\mathbb{C}}\big(\int_{0}^{z}\overline{K_{\zeta}(\eta)}
A(\zeta)d\zeta\big)\frac{e^{-2\phi(|z|)}}{(1+\phi'(|z|))^{2}}dm(z)\bigg)\leq P.$$

If $\|f\|^{2}_{F_{\phi}^{2}}=\infty$, it  contradicts  the condition of Theorem $1.8$. Therefore, $$
\|f\|^{2}_{F_{\phi}^{2}}<\frac{P}{1-Z_{K}(A)}<\infty
$$ and $f\in F_{\phi}^{2}.$
\end{proof}

\section{Acknowledgements}
The first author would like to thank Department of Physics and Mathematics,  University of Eastern Finland, for providing a good environment during the preparation of this work. We also thank the referees for many important and useful comments.

\scriptsize

\end{document}